\documentclass[12pt]{article}

\usepackage{amsmath,amsfonts}
\usepackage{graphicx}
\usepackage{color}

\setlength{\oddsidemargin}{0in} \setlength{\evensidemargin}{0in}
\setlength{\textheight}{9.0in} \setlength{\textwidth}{6.5in}
\setlength{\topmargin}{-0.5in}

%\setlength{\parindent}{0in}
%\setlength{\parskip}{5pt}

%\input{macrosblog}

%%%%%%%%% For wordpress conversion

\def\G{{\cal G}}
\def\({\left(}
\def\){\right)}

\newif\ifblog
\newif\iftex
\blogfalse
\textrue

\newcommand{\e}{\varepsilon}

\def\indi{{\bf 1}}

%%%%%%%%% Typesetting shortcuts

\def\P{{\mathbb P}}
\def\E{{\mathbb E}}

\def\R{{\mathbb R}}

\def\K{{\mathcal K}}

\def\R{{\mathbb R}}

\def\F{{\mathcal F}}

\def\p{{\partial}}

\def\M{{\mathcal{M}}}
\newcommand{\al}{\alpha}

\newcommand{\disp}{\displaystyle}
\newcommand{\nd}{\noindent}
\newcommand{\cd}{(\cdot)}
\newcommand{\wdt}{\widetilde}
\newcommand{\la}{\lambda}

\newcommand{\bea}{\bed\begin{array}{rl}}
\newcommand{\eea}{\end{array}\eed}
\newcommand{\ad}{&\!\!\!\disp}
\newcommand{\aad}{&\disp}
\newcommand{\barray}{\begin{array}{ll}}
\newcommand{\earray}{\end{array}}

\newcommand{\beq}[1]{\begin{equation} \label{#1}}
\newcommand{\eeq}{\end{equation}}
\newcommand{\bed}{\begin{displaymath}}
\newcommand{\eed}{\end{displaymath}}

\newcommand{\bedd}{\bed\begin{array}{l}}
	\newcommand{\eedd}{\end{array}\eed}

%%%%%%%%% Theorems and proofs

\newtheorem{theorem}{Theorem}[section]
\newtheorem{lemma}[theorem]{Lemma}
\newtheorem{definition}[theorem]{Definition}

\newtheorem{proposition}[theorem]{Proposition}

\newtheorem{remark}[theorem]{Remark}

\newenvironment{proof}{\noindent {\sc Proof:}}{\strut\hfill $\Box$} %\medskip}

\newcommand{\LL}{{\cal L}}
\newcommand{\wt}{\widetilde}
\newcommand{\RR}{{\mathbb R}}

\makeatletter
\@addtoreset{equation}{section}

\makeatother

\title{Properties of Switching Jump Diffusions:
Maximum Principles and Harnack Inequalities}

\author{Xiaoshan Chen,\thanks{School of Mathematical Sciences, South China Normal University, Guangdong, China,
xschen@m.scnu.edu.cn. This research was partially supported by NNSF of China (No.11601163) and NSF of Guangdong Province of China (No. 2016A030313448).} \and Zhen-Qing Chen,\thanks{Departments of Mathematics,
University of Washington,
Seattle, WA 98195, USA, zqchen@uw.edu. This
research was partially supported
by NSF grant DMS-1206276.
} \and Ky Tran,\thanks{Department
of Mathematics, College of Education, Hue University, Hue city, Vietnam.
quankysp@gmail.com. This research was
partially supported by Vietnam National Foundation for Science and Technology Development (NAFOSTED) under grant 101.03-2017.23.}
\and George Yin\thanks{Department
of Mathematics, Wayne State University, Detroit, MI 48202 USA,
gyin@math.wayne.edu. This research was
partially supported by the National Science Foundation under grant DMS-1710827.}
}

\begin{document}
\maketitle

\begin{abstract}
This work examines a class of switching jump diffusion processes.
The main effort is devoted to proving the maximum principle and obtaining the Harnack inequalities.
Compared with the diffusions and switching diffusions, the associated operators for switching jump diffusions
are non-local, resulting in more difficulty in treating such systems.
Our study is carried out by taking into consideration of the interplay of
 stochastic processes and the associated systems of
integro-differential equations.

\bigskip
\nd {\bf Keywords.} jump diffusion, regime switching, maximum principle, Harnack inequality.

\bigskip
\nd {\bf Mathematics Subject Classification.} 60J60, 60J75, 35B50, 45K05.

\bigskip
\nd {\bf Brief Title.} Maximum Principles and Harnack Inequalities

\end{abstract}

\newpage

\setlength{\baselineskip}{0.25in}
\section{Introduction}
In recent years,
 many different fields require the handling of
 dynamic systems in which there is a component representing random environment and other factors that are not
given as a solution of the usual differential equations.
Such systems have drawn new as well as resurgent attention because of the urgent needs of systems modeling, analysis,
and optimization in a wide variety of applications. Not only do the applications arise from the traditional fields of mathematical modeling,
 but
also they have appeared in emerging application areas such as  wireless communications, networked systems,
autonomous systems, multi-agent systems, flexible manufacturing systems,
 financial engineering, and
biological and
 ecological systems, among others.
Much effort has been devoted to the so-called hybrid systems. Taking randomness into consideration, a class of
such systems known as switching diffusions  has been investigated thoroughly; see for example, \cite{MY06,YZ10} and references therein.
Continuing our investigation on regime-switching systems,
this paper focuses on
a class switching jump diffusion processes.
To work on such systems, it is necessary to study
a number of fundamental properties.
Although we have a good understanding of switching diffusions, switching jump diffusions are more difficult to deal with.
One of the main difficulties is the operator being non-local.  When we study switching diffusions, it has been demonstrated that
although they are similar to
diffusion processes, switching diffusions have some  distinct features.
With the non-local operator used, the distinctions are even more pronounced.
Our primary motivation stems from the study of  a family of Markov processes in
which continuous dynamics, jump discontinuity, and discrete events coexist.
Their interactions reflect the salient features of the underlying systems.
Specifically,  we focus on regime-switching jump diffusion processes, in which the switching process is not exogenous but depends on the jump
diffusions. The distinct features of the systems include the presence of non-local operators, the coupled systems
of equations, and the tangled information due to the dependence of the switching process on the jump diffusions.

To elaborate a little more on the systems,  similar to \cite[Section 1.3, pp. 4-5]{YZ10}, we begin with the following description.
Consider a two component process $(X_t,\Lambda_t)$, where $\Lambda_t \in \{1,2\}$. We call $\Lambda_t$ the discrete event process with state space
$\{1,2\}$.
Imagine that we have two parallel  planes. Initially, $\Lambda_0=1$. It then sojourns in the state $1$ for a random duration.
During this period, the diffusion with jump traces out a curve on  plane 1 specified by the drift, diffusion, and jump coefficients. Then a random
switching takes place at a random time $\tau_1$, and $\Lambda$ switches to plane 2 and sojourns there for a random duration.
During this period, the diffusion with jump traces out a curve on plane 2 with different drift, diffusion, and jump coefficients.
What we are interested in is the case that $\Lambda_t$ itself is not Markov, but only the
two-component process $(X_t,\Lambda_t)$ is a Markov process.
Treating such systems,
similar to the study of switching diffusions,
 we may consider a number of questions:
Under what conditions,
will the processes be
recurrent and positive recurrent? Under what conditions, will the process be positive recurrent? Is it true that positive recurrence
implies the existence of an ergodic measure. To answer these questions, we need to examine a number of issues of the
 switching jump diffusions and the associated systems of integro-partial differential equations.

Switching jump diffusions models arise naturally in many applications.
To illustrate,
consider the following
motivational example--an optimal stopping problem.
  It is
 an extension of the optimal stopping problem
 for switching diffusions with diffusion dependent switching in \cite{Liu16}. We assume
 that the dynamics are described by switching jump diffusions rather than switching diffusions.
 Consider a two component Markov process $(X_t,\Lambda_t)$ given by
 $$ dX_t = b(X_t, \Lambda_t) dt +\sigma (X_t, \Lambda_t) dW(t) +  \int _{\RR_0} c (X_{t-},\Lambda_{t-},z) \wdt N_0(dt, dz),$$
 where $b\cd$, $\sigma\cd$, and $c \cd$ are suitable real-valued functions, $\wdt N_0\cd$ is a compensated real-valued Poisson process,
 $W\cd$ is a real-valued Brownian motion, and $\RR_0= \RR - \{0\}$.
 Because the example is for motivation only, we defer
 the discussion of the precise setup, formulation, and  conditions  needed
 for switching jump diffusions to the next section.
 We assume that $\Lambda$ depends on the dynamics of $X$.
 Denote the filtration by
 $\{ \F_t\}_{t\ge 0}$ and let
 ${\cal T}$ be the collection of $\F_t$-stopping times.
 Then the treatment of the optimal stopping problem leads to the consideration of the following
 value function
 $$V(x,i)= \sup_{\wdt \tau \in {\cal T}} \E_{x,i}  \Big[\int^{\wdt \tau}_0 [ e^{-\beta  t} L(X_t,\Lambda_t) dt +
 e^{-\beta \wdt \tau}
 \wdt G (X_{\wdt \tau}, \Lambda_{\wdt \tau}) ]\Big],$$
 where $L\cd$ and $\wdt G\cd$ are suitable functions, and $X_0=x$ and $\Lambda_0=i$.
 As an even more specific example, consider an asset model
   $$ dX_t = b( X_t, \Lambda_t) dt +\sigma (  X_t, \Lambda_t) dW(t) +  \int _{\RR_0} c (\Lambda_{t-},z) X_{t-} \wdt N_0(dt, dz).$$
   Then the risk-neutral price of the
 perpetual American put option is given by
  $$V(x,i)= \sup_{\wdt \tau \in {\cal T}} \E_{x,i}[K- X_{\wdt \tau}]^+.$$

  One of the
  motivations for using jump-diffusion type models is that it has been observed  empirically that
distributions of the returns often  have heavier tails than that of normal distributions.
In particular, if we take $N(t)$ to be
a one-dimensional stationary Poisson process with
$\E N(t)= \lambda t$ for some $\lambda>0$, and take
the compensated Poisson process to be
$\widetilde N(t)= N(t)-\lambda t$. The resulted system is used widely in
option
pricing and mean-variance portfolio selections.

Next, consider a modification of a frequently used
 system in control theory.
Let $\Gamma$ be a compact subset of $\R^d -\{0\}$ that is the
range space of the impulsive jumps. For any subset $B$ in $\Gamma$,
$N(t,B)$ counts the number of impulses on $[0,t]$ with values in
$B$. Consider
$$\begin{array}{ll}
&\disp dX_t=b(X_t,\Lambda_t)dt+\sigma(X_t,\Lambda_t)dW_t+dJ_t, \\
&\disp J_t=\int_0^t\int_\Gamma c(X_{s-}),\Lambda_{s-}),\gamma) N(ds,d\gamma),
\end{array}
$$
with $X_0=x, \Lambda_0=\Lambda$,
 together with
a transition probability specification of the form
$$
\P\{\Lambda_{t+\Delta t}=j|\Lambda_t=i,
(X_s,\Lambda_s),s\le t \}= q_{ij}(X_t)\Delta t + o(\Delta t), \quad
i\not =j,
$$
where $b$ and $\sigma$ are suitable vector-valued and matrix-valued functions, respectively, and $W$ is a standard vector-valued Brownian motion.
Assume that
$N(\cdot,\cdot)$ is independent of the
Brownian motion $W\cd$ and the switching process $\Lambda\cd$.
Alternatively, we can write
$$d\Lambda_t=\int_\R h(X_{s-}),\Lambda_{s-}),z)N_1(dt,dz),$$
where $h(x,i,z)=\sum_{j\in\M}(j-i)\indi_{\{z\in\Delta_{ij}(x)\}}$ with $\Delta_{ij}(x)$ being the consecutive left closed and right open intervals of the real line, and $\widetilde N(t,B)$ being a compensated Poisson measure, which is independent of the Brownian motion $W(t)$, $0<\lambda<\infty$ is known as the jump rate and $\pi(B)$ is the jump measure;
$N_1(dt,dz)$ is a Poisson measure with intensity $dt\times m_1(dz)$, and $m_1(dz)$ is the Lebesgue measure on $\R$, $N_1(dt,dz)$ is independent of the Brownian motion $W(t)$ and the Poisson measure $\widetilde N(\cdot,\cdot)$.
Define a
compensated or centered Poisson measure as
$$\wdt N(t,B)=N(t,B)- \lambda t \pi(B), \ \hbox{ for } \ B \subset
\Gamma,$$
where $0<\la<\infty$ is known as the jump rate and $\pi\cd$ is the
jump distribution (a probability measure).
In the above, we used the
setup similar to \cite[p. 37]{Kushner90}. With this centered Poisson
measure, we can rewrite $J_t$ as $$ J_t= \int^t_0
\int_\Gamma g(X_{s-},\Lambda_{s-},\gamma) \wdt N(ds,d\gamma) + \la \int^t_0
\int_\Gamma g(X_{s-},\Lambda_{s-},\gamma) \pi(d\gamma) ds.$$
The related jump diffusion models without switching have been
used in a wide range of applications in control systems; see \cite{Kushner90}
and references therein.

 We devote our attention to the maximum principle and Harnack inequalities for the jump-diffusion processes with regime-switching in this paper.
Apart from being interesting in their own right,  they play very important roles in analyzing many properties such as recurrence, positive recurrence,
and ergodicity of the
 underlying systems.
 There is
  growing interest in treating switching jump systems; see \cite{Xi09} and many references therein.
 However, up to date,
 there seems to be no results on maximum principles and
    Harnack inequality for jump-diffusion processes with regime switching.
    As was alluded to in the previous paragraph,
 the main difficulty is that the operators involved are non-local. Thus, the results obtained for the systems
 (known as weakly coupled elliptic systems)
  corresponding to switching diffusions cannot be
 carried over.  Thus new approaches and ideas have to be used.

Looking into the literature,
in \cite{Evans10}, Evans proved the maximum principle for uniformly elliptic equations.
In the classical book \cite{PW67},  Protter and  Weinberger treated maximum principle for  elliptic equations as well as Harnack inequalities
and  generalized maximum principle together with a number of other topics.
For switching diffusion processes,  several papers studied Harnack inequality for the weakly coupled systems of elliptic equations. In
 \cite{CZ97},
 Chen and Zhao assumed H\"older continuous coefficients, and carried out the proofs based on the representations and estimates of the Green function and harmonic measures of the operators in small balls. In \cite{AGM99}, Arapostathis, Ghosh, and Marcus assumed only measurability of the coefficients to prove the desired results; their proofs were based on the approach of Krylov \cite{Kry87} for estimating the oscillation of a harmonic function on bounded sets.
There have been much interest in treating jump processes and associated non-local operators.
In a series of papers,
  Bass and Kassmann \cite{BK05},
 Bass, Kassmann,  and Kumagai \cite{BKK},  Bass and Levin \cite{BL02},
 Chen and Kumagai \cite{CK1, CK2, CK3}, Song and Vondracek \cite{SV05}
examined Harnack inequalities for
Markov processes with discontinuous sample paths.
In \cite{CS09}, Caffarelli and  Silvestre considered nonlinear integro-differential equations arising from
stochastic control problems with pure jump L\'evy processes (without a Brownian motion) using a purely analytic approach.  Nonlocal
version of ABP (Alexandrov-Bakelman-Pucci) estimate, Harnack inequality, and regularity were obtained.
Most recently, Harnack inequality for solutions to the
Schr\"odinger operator were dealt with
in  \cite{AR16} by Athreya and Ramachandran
for jump diffusions whose associate operator is
an integro-differential operator includes
the pure jump part as well as elliptic part.
Their approach is based on the comparability of Green functions and Poisson kernels using conditional gauge function
and strong regularity is assumed on the coefficients of the diffusion and jumping components.

In this paper, we focus on stochastic processes that have a switching component in addition to the jump diffusion component.
The switching in fact is ``jump diffusion dependent'';
more precise notion will be given in the formulation section. When the switching component is missing,  it reduces to the jump diffusion processes; when the continuous disturbance due to Brownian motion is also missing, it reduces to the case of pure jump processes. If only the jump process is missing, it reduces to the case of switching diffusions.
Compared to the case of switching diffusion processes, in lieu of systems of elliptic partial differential equations, we have to deal with  systems of integro-differential equations. Using mainly a probabilistic approach, we establish the maximum principles.
 Because local analysis alone is not adequate,
 the approach treating  Harnack inequality for switching diffusion processes cannot be used in the current case.
We adopt the probabilistic approach via Krylov type estimates from \cite{BL02}, which was further extended in
   \cite{BK05,  Foondun09, SV05},
  to derive the Harnack inequality for the nonnegative solution of the system of integro-differential equations.

The rest of the paper is arranged as follows. Section \ref{sec:formulation} presents the formulation of the problem. In Section \ref{sec:maximum}, we develop the maximum principle for
 regime-switching jump-diffusions processes,
 using a probabilistic approach that allows us to work
  under a quite general context.
    We obtain the Harnack inequality for the
    regime-switching
    jump-diffusions processes in Section \ref{sec:harnack}.
 Finally, the paper is concluded with further remarks.

\section{Formulation}\label{sec:formulation}

Throughout the paper,
 we use $z'$ to denote the transpose of $z\in \R^{l_1
	\times l_2}$ with $l_1, l_2 \ge 1$, and $\R^{d\times 1}$ is simply
written as $\R^d$. If $x\in \R^d$, the norm of $x$ is denoted by
$|x|$.
For $x_0\in \R^d$ and $r>0$, $B(x_0, r)$ denotes the open ball in $\R^d$ centered at $x_0$ with radius $r>0$.
If $D$ is a Borel set in $\R^d$, $\overline{D}$ and
$D^c =\R^d \setminus D$
denote the closure
and the complement of $D$, respectively. The space $C^2(D)$ refers to the class of functions whose partial derivatives up to order 2 exist and are continuous in $D$, and $C^2_b(D)$ is the subspace of $C^2(D)$ consisting of those functions whose partial derivatives up to order 2 are bounded. The indicator function of a set $A$ is denoted $\indi_A$.
Let $Y_t=(X_t,\Lambda_t)$ be a two component Markov process such that $X$ is an $\R^d$-valued process, and $\Lambda$ is a switching process taking values in a finite set $\mathcal M=\{1,2,\dots,m\}$.
Let $b(\cdot,\cdot): \R^d\times \mathcal M\mapsto \R^d$, $\sigma(\cdot,\cdot): \R^d\times\mathcal M\mapsto\R^d\times\R^d$, and
 for each $x\in \R^d$, $\pi_i (x, dz)$ is a $\sigma$-finite measure on $\R^d$  satisfying
$$
\int_{\R^d} (1\wedge |z|^2) \pi_i (x, dz) <\infty.
$$
Let $Q(x)=(q_{ij}(x))$ be an $m\times m$ matrix depending on $x$ such that $$q_{ij}(x)\geq 0 \quad \text{for } i\ne j, \quad  \sum_{j\in\mathcal{M}}q_{ij}(x)\le 0.$$ Define
$$Q(x)f(x,\cdot)(i):=\sum_{j\in\mathcal{M}}q_{ij}(x)f(x,j).$$
The generator $\mathcal G$ of the process $(X_t,\Lambda_t)$ is given as follows. For a function $f:\RR^d\times \M\mapsto \RR$ and $f(\cdot, i)\in C^2(\R^d)$
for each $i\in\mathcal M$, define
\begin{equation}\label{E:1}
\mathcal Gf(x,i)
= \LL_i f(x,i)+ Q(x)f(x, \cdot)(i), \quad   (x, i)\in \R^d\times \M,\end{equation}
where
\begin{eqnarray}
 \mathcal{L}_i f(x,i) &=&\sum^d_{k,l=1}a_{kl}(x,i){\frac{\p^2f(x,i)}{\p x_k\p x_l}+\sum^d_{k=1}b_{k}(x,i)\frac{\p f(x,i)}{\p x_k}}
\nonumber\\
 && +\int_{\R^d}  \left( f(x+z,i)-f(x,i)-\nabla f(x,i)\cdot z \indi_{\{|z|<1\}}\right)
  \pi_i(x, dz) ,    \label{e:2.2}
\end{eqnarray}
where
$a(x,i)=\sigma(x,i)\sigma'(x,i)$, $\nabla f(\cdot,i)$
denotes the gradient  of $f(\cdot,i)$.

Let $\Omega=D\big([ 0, \infty ), \R^d\times \M \big)$
 denote the space of all right continuous functions mapping $[0, \infty)$ to $\R^d \times \M$, having finite left limits. Define $(X_t, \Lambda_t)=w(t)$ for  $w\in \Omega$
 and let $\{\mathcal{F}_t\}$ be the right continuous filtration generated by the process $(X_t, \Lambda_t)$.
A probability measure $\P_{x,i }$ on $\Omega$ is a solution to the martingale problem for $\(\mathcal{G}, C^2_b(\R^d)\)$ started at $(x, i)$ if
\begin{itemize}
	\item[(a)] $\P_{x, i}( X_0=x, \Lambda_0=i )=1$,
	\item[(b)] if $f(\cdot, i)\in C^2_b(\R^d)$ for each $i\in \M$, then
	$$f(X_t, \Lambda_t)-f(X_0, \Lambda_0)-\int_0^t \mathcal{G} f( X_s, \Lambda_s )ds,$$
	is a $\P_{x, i}$ martingale.
	
	If for each $(x, i)$, there is only one such $\P_{x, i}$, we say that the martingale problem for
	$\(\mathcal{G}, C^2_b(\R^d)\)$ is well-posed.
\end{itemize}

\begin{definition}\label{D:har} {\rm
Let $U=D\times \M$
with $D\subset\mathbb{R}^d$
being a bounded connected open set. A bounded and Borel measurable function $f:\mathbb{R}^d\times\M\mapsto\mathbb{R}^d$ is said to be $\mathcal G$-harmonic in $U$ if for
any relatively compact open subset $V$ of $U$,
$$ f(x,i)=\mathbb \mathbb{E}_{x,i}\big[f\(X(\tau_V),\Lambda({\tau_V})\)\big] \quad \text{ for all}\quad (x, i)\in V,$$
where $\tau_V=\inf\{t\ge 0: \(X(t), \Lambda(t)\) \notin V \}$ is the first exit time
from $V$.
}\end{definition}

 Throughout the paper,   we assume conditions (A1)-(A3) hold
until further notice.

 \begin{itemize}
		\item[{(A1)}]  The functions   $\sigma
		(\cdot, i)$ and $b
		(\cdot, i)$ are bounded and continuous, $q_{ij}\cd$ is bounded and Borel measurable.

		\item[{(A2)}] There exists a constant $\kappa_0\in (0, 1]$ such that
		$$ \kappa_0|\xi|^2\le \xi'a(x, i)\xi \le
		\kappa_0^{-1}|\xi|^2 \quad  \text{ for all }\xi\in \R^d, x\in \R^d, i\in \M.
		$$

		\item[{(A3)}]
		There exists a $\sigma$-finite measure $\Pi (dz)$ so that   $\pi_i (x, dz) \leq \Pi (dz)$
for every $x\in \R^d$ and $i\in \M$
		and  		
		$$
		 \int_{\R^d} \left(1\wedge |z|^2\right) \Pi(dz) \le \kappa_1 <\infty.
		$$
		\item [(A4)] For any $i\in \M$ and $x\in \R^d$, $\pi_i (x, dz)=\wdt  \pi_i (x, z) dz$. Moreover,  for
		any $r\in (0,1]$,
		any $x_0\in \R^d$, any $x, y \in B(x_0, r/2)$ and $z\in B(x_0, r)^c$, we have
		$$
		\wdt\pi_i(x,z-x)\leq \al_r\wdt\pi_i(y,z-y),
		$$
		where $\alpha_r$ satisfies
		$1\leq \alpha_r \leq \kappa_2 r^{-\beta}$
		with $\kappa_2$ and $\beta$ being positive constants.
	\end{itemize}

\begin{remark}\label{R:2.2} {\rm
	(a) Under Assumptions (A1)-(A3), for each $i\in \M$, the martingale problem for $(\LL_i, C^2_b(\R^d))$
is well-posed for every starting point $x\in \R^d$ (see \cite[Theorem 5.2]{Komatsu}).
Then the switched Markov process $\(X_t, \Lambda_t\)$ can be constructed from
jump diffusions having infinitesimal generators $\LL_i$, $1\leq i\leq m$,
		as follows. 	
			Let $X^i$ be
		the strong Markov process whose distribution is the unique solution to the martingale problem
		$(\LL_i, C^2_b(\R^d))$.
			Suppose we start the process at $(x_0, i_0)$, run a subprocess $\wt X^{i_0}$ of $X^{i_0}$ that got killed with rate
			$-q_{i_0i_0} (x)$; that is, via Feynman-Kac transform $\exp \left(\int_0^t q_{i_0i_0} (X^{i_0}_s)ds \right)$. Note that this subprocess $\wt X^{i_0}$ has infinitesimal generator  $\LL_{i_0}+q_{i_0i_0}$.
			At the lifetime $\tau_1$ of the killed process $\wt X^{i_0}$, jump to
plane $j\ne i_0$
			with probability $-q_{i_0j} (X^{i_0}(\tau_1-))/q_{i_0i_0} (X^{i_0}(\tau_1-))$
			and
			run an independent copy of
			a subprocess $\wt X^j$ of $X^j$ with killing rate $-q_{jj}(x)$ from position $X^{i_0}(\tau_1-)$. Repeat this procedure. The resulting process $\(X_t, \Lambda_t\)$
			is a strong Markov process with
			  lifetime $\zeta$ by \cite{INW, M}.
For each $x\in  \R^d$,
we say that the matrix $Q(x)$  is {\it Markovian} if $\sum_{j\in \M}q_{ij}(x)=0$ a.e. on
			  $\R^d$ for every $i\in \M$,
				and
{\it sub-Markovian} if	$\sum_{j\in \M}q_{ij}(x)\leq 0$ a.e. on
			  $\R^d$ for every $i\in \M$.
 When $Q(x)$ is {\it Markovian}, the lifetime
 $\zeta =\infty$, and when
 $Q(x)$ is just
{\it sub-Markovian}, $\zeta$ can be finite.
			  We use the convention that $(X_t, \Lambda_t)=\partial$ for $t\geq \zeta$,
			  where $\partial $ is a cemetery point,
				and any function is extended to $\partial$ by taking value zero there.
			It is easy to check that the law of $\(X_t, \Lambda_t\)$
			solves the martingale problem for $\(\mathcal{G}, C^2_b(\R^d)\)$ so  it is
			the desired switched jump-diffusion. This way of constructing switched diffusion
			has been utilized in \cite[p.296]{CZ}.  It follows from \cite{Wang14} that   law of $\(X_t, \Lambda_t\)$
			is the unique solution to the martingale problem for $\(\mathcal{G}, C^2_b(\R^d)\)$.

(b)	  Conditions (A1) and (A2) presents
	the uniform ellipticity of $a(x, i)$ and the uniform boundedness of $b(x, i)$ and $q_{ij}(x)$.
	 The measure $\pi_i(x, dz)$
	can be thought of as the intensity of the number of jumps from $x$ to $x+z$ (see \cite{BK05, BL02}).
Condition
(A4) tells us that $\pi_i(x, dy)$ is absolutely continuous with respect to the Lebesgue measure $dx$ on $\R^d$, and
the intensities of jumps from $x$ and $y$ to a point $z$ are comparable if
$x$, $y$ are relatively far from $z$ but relatively close to each other. If $\wdt\pi_i(x, z)$  is such that
$$
\frac{c_i^{-1}}{|z|^{d+\alpha_i}} \leq \wdt\pi_i (x, z) \leq \frac{c_i }{|z|^{d+\alpha_i}}
$$
for some $c_i\geq 1$ and $\alpha_i\in (0, 2)$,  then  condition (A4) is
satisfied with $1< \al_r< \kappa_2$ independent of $r\in (0, 1)$.
Condition (A4) is an essential hypothesis in the proof of the Harnack inequality.
 }\end{remark}

Throughout the paper,  we use capital letters $C_1, C_2, \dots$ for constants appearing in the statements of the results, and lowercase letters $c_1, c_2, \dots$ for constants appearing in proofs. The numbering of the latter constants afresh in every new proof.

\section{Maximum Principle}\label{sec:maximum}

In this section, we
establish maximum principle for the coupled system under  conditions (A1)-(A3).
We emphasize that we do not assume condition (A4) for the maximum principle.
In Subsection \ref{S:3.1}, we prepare three propositions for general diffusions with jumps that will be used several times in the sequel.

\subsection{Jump Diffusions and Strict Positivity}\label{S:3.1}

Consider
\begin{eqnarray}
 \mathcal{L} f(x) &=&\sum^d_{k,l=1}a_{kl}(x ){\frac{\p^2f(x )}{\p x_k\p x_l}+\sum^d_{k=1}b_{k}(x )\frac{\p f(x )}{\p x_k}}
\nonumber\\
 && +\int_{\R^d}  \left( f(x+z )-f(x )-\nabla f(x )\cdot z \indi_{\{|z|<1\}}\right)
 \pi (x,dz),
\end{eqnarray}
where $(a_{kl}(x))$ is a continuous matrix-valued function that is uniformly
elliptic and bounded,
 $b(x)=(b_1(x), \dots, b_d (x))$ is a bounded
$\R^d$-valued function on $\R^d$, and $\pi (x, dz)$ is a $\sigma$-finite measure on $\R^d$  satisfying
$$
K:=\int_{\R^d} \left( 1\wedge |z|^2 \right) \sup_{x\in \R^d} \pi (x, dz) <\infty.
$$
By \cite[Theorem 5.2]{Komatsu}, there is a unique conservative strong Markov process  $X=\{X_t, t\geq 0; \P_x, x\in \R^d\}$
that is the unique solution to the martingale problem $(\LL, C^2_b(\R^d))$.  Suppose $q\geq 0$ is a bounded function on $\R^d$.
One can kill the sample path of $X$ with rate $q$. For this, let $\eta$ be an independent exponential random variable with mean $1$.
Let
$$
\zeta =\inf \left\{t>0: \int_0^t q(X_s) ds >\eta \right\}
$$
and define
$Z_t=X_t$ for $t<\zeta$ and $Z_t=\partial$ for $t\geq \zeta$, where $\partial$ is a cemetery point.
It is easy to see that for any   $x\in \R^d$ and $\varphi \geq 0 $ on $\R^d$,
$$
\E_x [ \varphi (Z_t); t<\zeta] = \E_x \left[ e_q(t) \varphi (X_t) \right], \quad t\geq 0,
$$
where
$$
e_q(t):= \exp \left( - \int_0^t q(X_s) ds \right).
$$
The process $Z$ is called   the subprocess of $X$ killed at rate $q$, and $\zeta$ the lifetime of $Z$.
For $A\subset \R^d$, we define its hitting time and exit time of $Z$ by
$$
\sigma^Z_A=\inf\{t\geq 0: Z_t\in A\} \quad \hbox{and} \quad
     \tau^Z_A=\inf\{t\geq 0: Z_t\notin A\} ,
 $$
with the convention that $\inf \emptyset =\infty$. Note that $\tau^Z_A\leq \zeta$.
The following two  propositions are based on the support theorem for  diffusions with jumps in \cite{Foondun09}.

\begin{proposition}\label{P:4.6.2}
Let $\lambda \geq 1$ be so that $\lambda^{-1} I_{d\times d} \leq (a_{kl}(x)) \leq \lambda I_{d\times d} $.
There is a   positive constant $C_1$ depending only on $\lambda$ and an upper bound on $\| b\|_\infty$
and $\| q\|_\infty$    such that for   any $R\in (0, 1]$,  $r\in (0, R/4)$, $x_0\in \R^d$,   $x \in B(x_0, 3R/2) $
and  $y\in B(x_0, 2R)$,
	$$\P_{y  }\(\sigma^Z_{B(x, r )}<\tau^Z_{B(x_0, 2R)}\)\ge C_1r^6.$$
\end{proposition}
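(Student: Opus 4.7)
The plan is to reduce the estimate for the killed process $Z$ to one for the underlying jump diffusion $X$, and then control the Feynman-Kac factor that accounts for the killing. By the construction of $Z$, we have
\begin{equation*}
\P_{y}\(\sigma^Z_{B(x,r)} < \tau^Z_{B(x_0,2R)}\) = \E_y\!\left[ e_q(\sigma^X_{B(x,r)});\ \sigma^X_{B(x,r)} < \tau^X_{B(x_0,2R)} \right].
\end{equation*}
So it suffices to choose a deterministic time horizon $T_0$ depending only on $R$, $\lambda$, $\|b\|_\infty$, $\|q\|_\infty$ (and crucially not on $r$), and show both that $\P_y(\sigma^X_{B(x,r)} \le T_0 \text{ and } X_s \in B(x_0,2R) \text{ for all } s\le \sigma^X_{B(x,r)}) \ge c_1 r^6$ and that on this event the killing factor satisfies $e_q(\sigma^X_{B(x,r)}) \ge e^{-\|q\|_\infty T_0} \ge c_2 > 0$.

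For the quantitative hitting estimate for $X$, I would invoke the support theorem of Foondun \cite{Foondun09} for diffusions with jumps. First, connect $y$ and $x$ by a polygonal path inside $B(x_0, 3R/2)$; since both points lie within $B(x_0, 2R)$, we may arrange the path to have length bounded by a multiple of $R \le 1$ and to remain at distance at least $r$ from $\partial B(x_0,2R)$. Enclose this path in a tube of radius $r/4$. On the event that the sample path of $X$ remains in this tube until first entering $B(x,r)$, we automatically have $\sigma^X_{B(x,r)} < \tau^X_{B(x_0,2R)}$, and the time to hit $B(x,r)$ is controlled by the tube geometry and the ellipticity bound (A2), yielding $\sigma^X_{B(x,r)} \le T_0$ for a suitable $T_0$.

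The main obstacle is the quantitative tube estimate: unlike the pure diffusion case, the jump component can eject $X$ from the tube in one bad jump, and (A3) only gives integrability of $(1\wedge|z|^2)\Pi(dz)$, not a uniform bound on the number of jumps. I would handle this by splitting $X$ into its diffusion part plus small jumps, and the large jumps. Using (A3) and the fact that $T_0$ is uniform in $r$, the probability of no ``bad'' (tube-ejecting) jumps on $[0, T_0]$ is bounded below by an exponential $e^{-cT_0}$ independent of $r$. The remaining ``small-jump plus diffusion'' part is controlled by a Girsanov-type support theorem argument carried out in \cite{Foondun09, BL02}, which supplies the polynomial factor $c\, r^6$ from the probability that the diffusion component stays within the tube of radius $r/4$ while reaching a prescribed endpoint.

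Combining these two factors, namely the no-bad-jump bound, the diffusion-in-tube bound, and the uniform lower bound $e^{-\|q\|_\infty T_0}$ on the killing factor, we obtain the desired inequality with some explicit $C_1$ depending only on $\lambda$, $\|b\|_\infty$ and $\|q\|_\infty$. The exponent $6$ in $r^6$ is not sharp in any intrinsic sense; it is the exponent that falls out of the support-theorem construction when one tracks the dependence of the diffusion tube estimate on the tube radius together with the endpoint-localization step.
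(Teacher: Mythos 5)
Your proposal takes essentially the same route as the paper: reduce the estimate for the killed process $Z$ to the corresponding hitting/exit estimate for the jump diffusion $X$, invoke Foondun's support theorem to obtain the $c\,r^6$ factor from the probability that $X$ tracks a path from $y$ towards $x$, and control the Feynman--Kac killing factor by an $r$-independent lower bound over a bounded time horizon. The paper uses a single unit-speed ray from $y$ in the direction of $x$ over the absolute time window $[0,8]$ (and a tube of radius $r$), rather than a polygonal path, tube radius $r/4$, and an $R$-dependent $T_0$, but those differences are cosmetic.

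One point of caution about your explanatory aside on separating small and large jumps: the claim that the probability of no tube-ejecting jumps on $[0,T_0]$ is bounded below by $e^{-cT_0}$ \emph{independently of $r$} is not valid under (A3). Since (A3) only controls $\int (1\wedge |z|^2)\Pi(dz)$, the rate of jumps of size exceeding the tube radius (of order $r$) can be as large as a constant times $r^{-2}$, so the crude ``no bad jumps on $[0,T_0]$'' bound degenerates to roughly $e^{-cT_0/r^2}$, which decays faster than any polynomial in $r$ and would obliterate the $r^6$ lower bound. Foondun's support theorem does not proceed via such a decomposition; the polynomial exponent emerges from the L\'evy system and a Girsanov argument without requiring the absence of jumps of a fixed size. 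Since you ultimately defer to \cite{Foondun09} (Theorem 4.2 and Remark 4.3) for the actual $c\,r^6$ estimate, the overall proof structure stands, but the heuristic mechanism you describe would not carry the argument on its own.
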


\begin{proof} Note that  	 $Z_t=X_t$ for $t\in [0, \zeta)$, where $\zeta$ is
		the lifetime of $Z$.	Define
	$$
	\sigma_{B(x, r )} =\inf\{t\ge 0: X_t\in B(x, r ) \}, \quad \tau_{B(x_0, 2R)}=\inf\{t\ge 0: X_t\notin B(x_0, 2R) \}.
	$$
	Define a function $\phi:[0, 8]\mapsto \R^d$ as follows
	$$\phi(t)=y+\dfrac{x-y}{|x-y|}t, \quad t\in [0, 8].$$
	By \cite[Theorem 4.2]{Foondun09} and \cite[Remark 4.3]{Foondun09}, there exists a constant $c_1>0$
	 so  that
	\beq{E:17.4}
	\P_y\Big(\sup\limits_{t\le 8 }|X_t-\phi(t)|<r \Big)\ge c_1 r^6,
	\eeq
	for any $x\in B(x_0, 3R/2) $ and $r\in (0, R/4)$. Moreover, $c_1$ depends only on $\lambda$ and an upper bound
	on $\| b\|_\infty$  and $\| q\|_\infty$.
	Since   $|\phi'(t)|=1$ and $\phi (|x-y|)=x$,  on $\big\{\sup\limits_{t\leq  8}|X_t-\phi(t)|<r \big\}$, we have
	$X_{|x-y|} \in B(x, r )$,  $X_t \in B(x_0, 2R)$  for  $0\leq t\leq |x-y|$, and $|X_{8R}-x_0|\geq |X_{8R}-y|-|y-x_0|>3R$.
 	As a result,
	$\sigma_{B(x, r)} <|x-y|<\tau_{B(x_0, 2R)}< 8R\leq 8$ on $ \big\{\sup\limits_{t\leq  8}|X_t-\phi(t)|<r \big\}$.
	Then \eqref{E:17.4} leads to
	$$
	\P_y\Big( \sigma_{B(x, r)} <\tau_{B(x_0, 2R)} <8  \Big)\ge c_1r^6.
	$$
	Since  $\P_{y  }(\zeta>8) \ge   \exp(-6  \|q\|_\infty)$,
	 	we have
	\begin{eqnarray*}
	\P_{y }\(\sigma^Z_{B(x, r)}<\tau^Z_{B(x_0, 2R)}\) &\geq & \P_y\left( \sigma^Z_{B(x, r)} <\tau^Z_{B(x_0, 2R)} <8 <\zeta \right) \\
&\ge& \exp(-6  \|q\|_\infty) \P_y\Big( \sigma_{B(x, r)} <\tau_{B(x_0, 2R)} <8 \Big) \\
& \ge& \exp(-6  \|q\|_\infty)  c_1r^6.
	\end{eqnarray*}
This proves the proposition.
\end{proof}

\begin{proposition}\label{P:4.6}
{\rm (i)}  For any $0<r\leq 1/2$ and $x_0\in \R^d$, if $A\subset B(x_0, r)$ has positive Lebesgue measure, then
$\P_{x}(\sigma^Z_A<\tau^Z_{B(x_0, 2r)})>0$ for every $x\in B(x_0, r)$.

{\rm (ii)}    Let $\rho\in (0,1)$ be a constant.
There exist a nondecreasing  function $\Phi: (0, \infty)\mapsto (0, \infty)$ and $ r_0\in (0, 1/2]$ such that
for any $ x_0  \in \RR^d $, any  $r\in (0, r_0)$, and any Borel subset $A$ of
$B(x_0,r)$ with $|A|/r^d\ge \rho$, we have
\begin{equation}\label{E:17}
\P_{x}\Big(\sigma^Z_A<\tau^Z_{B(x_0, 2r)}\Big)\geq  \dfrac{1}{2}\Phi\(|A|/r^d\),\quad x\in B(x_0,r).
\end{equation}
\end{proposition}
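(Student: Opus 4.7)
I would prove part (ii) first and then deduce part (i) from it via a Lebesgue density argument.

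For (ii), writing $\tau=\tau^Z_{B(x_0,2r)}$ for brevity, the strong Markov property at the hitting time $\sigma^Z_A$ gives
\[
\E_x\!\left[\int_0^\tau \indi_A(Z_s)\,ds\right]
\;\le\; \P_x\!\left(\sigma^Z_A<\tau\right)\,\sup_{y\in\bar B(x_0,2r)}\E_y[\tau].
\]
An It\^o computation on $v(x)=4r^2-|x-x_0|^2$, using the uniform ellipticity in (A2), the boundedness of $b$ and $q$, and the jump-tail control $\kappa_1<\infty$ from (A3) to absorb the non-local contribution, yields $\sup_y\E_y[\tau]\le c_0\, r^2$ once $r_0$ is taken sufficiently small. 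Hence (ii) reduces to a matching Krylov-type lower bound
\[
\E_x\!\left[\int_0^\tau \indi_A(Z_s)\,ds\right]\;\ge\; c_1\,\Phi_0(|A|/r^d)\,r^2
\qquad\text{for every }x\in B(x_0,r),
\]
with $\Phi_0$ nondecreasing and positive; dividing then gives $\Phi=(c_1/c_0)\Phi_0$.

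For this Krylov bound I would write the left side as $\int_A G^Z(x,y)\,dy$, where $G^Z$ is the Green function of $Z$ in $B(x_0,2r)$, and bound $G^Z(x,y)\ge e^{-\|q\|_\infty c_0 r^2}\,G^X(x,y)$ via a Feynman--Kac comparison with the unkilled jump diffusion $X$. The uniform ellipticity of the diffusion part then delivers an Aronson-type estimate $G^X(x,y)\ge c_2\, r^{2-d}$ on $B(x_0,r)\times B(x_0,r)$ for $d\ge 3$ (with the analogous logarithmic/constant bounds in $d=1,2$), so integrating over $A$ produces $\Phi_0(t)\asymp t$, as required. For part (i), fix $A\subset B(x_0,r)$ with $|A|>0$; since $\partial B(x_0,r)$ is Lebesgue-null, $A$ has a density point $y_0$ in the open interior. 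Pick $r'\in(0,r_0)$ small enough that $B(y_0,2r')\subset B(x_0,2r)$ and $|A\cap B(y_0,r')|/(r')^d\ge 1/2$. Proposition~\ref{P:4.6.2} (with the role of its $R$ played by our $r$ and small radius $r'$) gives $\P_x\!\left(\sigma^Z_{B(y_0,r')}<\tau^Z_{B(x_0,2r)}\right)\ge C_1(r')^6>0$; combining this via the strong Markov property with part (ii) applied to $A\cap B(y_0,r')$ inside $B(y_0,r')$ yields a positive product, proving (i).

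The main obstacle is the Krylov lower bound in (ii). Because $Z$ carries both a nontrivial jump component and a bounded killing term, classical Aronson bounds for pure elliptic diffusions do not transfer verbatim; the plan is to peel off the killing by the Feynman--Kac comparison above (harmless because $r\le r_0\le 1/2$ keeps $e^{-\|q\|_\infty c_0 r^2}$ bounded away from $0$) and then to exploit that on the small spatial scale $r\le r_0$ the uniformly elliptic diffusion component dominates the dynamics. An alternative, more analytic route is to adapt the Krylov-type estimate of \cite{BL02} (further developed in \cite{BK05, Foondun09}) directly to the present integro-differential operator with killing; either way, the key input is the uniform ellipticity (A2) and the $\sigma$-finite jump tail (A3), condition (A4) being unnecessary here.
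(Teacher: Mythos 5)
Your occupation-time route to (ii) is genuinely different from the paper's argument, and it has a real gap. The paper does not go through Green function lower bounds at all: it cites Foondun's Corollary~4.9 to get the Krylov estimate $\P_x(\sigma_A<\tau_{B(x_0,2r)})\ge\Phi(|A|/r^d)$ directly for the \emph{unkilled} process $X$, bounds $\E_x\tau_{B(x_0,2r)}\le c_1 r^2$ by the same It\^{o} computation you describe, and then handles the killing not by a Feynman--Kac comparison of Green functions but simply by showing $\P_x(\zeta>\tau_{B(x_0,2r)})\ge e^{-\|q\|_\infty r}-c_1 r\to1$, which makes $\P_x(\tau_{B(x_0,2r)}\ge\zeta)\le\tfrac12\Phi(\rho)$ for $r<r_0$; (E:17) then follows by subtraction since $\Phi(\rho)\le\Phi(|A|/r^d)$.

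The gap in your plan is the claimed Aronson-type bound $G^X(x,y)\ge c_2\,r^{2-d}$ on $B(x_0,r)\times B(x_0,r)$. Under (A1)--(A3) the operator is a uniformly elliptic diffusion \emph{plus} a genuinely non-local part with only a $\sigma$-finite jump tail controlled by $\kappa_1$; pointwise two-sided Green function bounds of Aronson type are not available from these hypotheses alone, and they are not what Foondun (following Bass--Levin) proves --- his argument produces a hitting-probability lower bound $\Phi(|A|/r^d)$ with no claim that $\Phi(t)\asymp t$. Asserting the Aronson bound here would therefore be circular or simply unjustified, since that bound is at least as strong as the Krylov estimate you are trying to derive. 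You do flag this as the main obstacle and offer the fallback of invoking the Krylov estimate from \cite{BL02, BK05, Foondun09} directly; that fallback is essentially the paper's proof, except the paper is careful to apply it to $X$ (where the citation literally applies) and to transfer to $Z$ afterward, rather than attempting to cite it for the killed process $Z$ where a zeroth-order term is present.

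Your treatment of (i) via a Lebesgue density point, Proposition~\ref{P:4.6.2} for the intermediate ball, and part (ii) on the small ball is correct but unnecessarily elaborate. The paper's (i) is one line: Foondun's Corollary~4.9 already gives $\P_x(\sigma_A<\tau_{B(x_0,2r)})>0$ for \emph{any} $A$ of positive measure (no density assumption needed), and then $\P_x(\sigma^Z_A<\tau^Z_{B(x_0,2r)})=\E_x\big[e_q(\tau_{B(x_0,2r)})\,\indi_{\{\sigma_A<\tau_{B(x_0,2r)}\}}\big]>0$ since $e_q>0$ a.s.\ and $\tau_{B(x_0,2r)}<\infty$ a.s.\ by (e:3.15). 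Your version works, but it invokes machinery that the direct argument avoids, and it makes (i) depend on (ii) and on the choice of $r_0$, whereas the paper's (i) holds for all $r\le 1/2$.
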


\begin{proof} As in the proof of Proposition \ref{P:4.6.2},
 	define
	$$\
	\sigma_A=\inf\{t\ge 0: X_t \in A \} \quad \text{and} \quad  \tau_{B(x_0, 2r)} =\inf\{t\ge 0: X_t\notin B(x_0, 2r) \}.
	$$
By \cite[Corollary 4.9]{Foondun09}, there is a nondecreasing  function $\Phi: (0, \infty)\mapsto (0, \infty)$
	such that if $A\subset B(x_0, r)$, $|A|>0$, $r\in (0, 1/2]$
and $x\in B(x_0, r)$, then
\beq{E:17.2}
\P_{x}(\sigma_A<\tau_{B(x_0, 2r)})\ge \Phi\( {|A|}/ {r^d}\).
\eeq	
Using test function and  It\^{o}'s formula, it is easy to derive (see \cite[Proposition 3.4(b)]{Foondun09}
or Proposition \ref{L:4.5} below) that there is a constant $c_1>0$ independent of $x_0$ and $r\in (0, 1/2]$ so that
\begin{equation}\label{e:3.15}
\E_x \tau_{B(x_0, 2r )}\le c_1 r^2 \quad \hbox{for any } x\in B(x_0, 2r).
\end{equation}

(i) Suppose $0<r\leq 1/2$ and $A\subset B(x_0, r)$ has positive Lebesgue measure. Then by \eqref{E:17.2},
$\P_{x}(\sigma_A<\tau_{B(x_0, 2r)})>0$. Hence in view of \eqref{e:3.15}, we have for every $x\in B(x_0, r)$,
$$
\P_{x}(\sigma^Z_A<\tau^Z_{B(x_0, 2r)})
 \geq  \P_{x}(\sigma_A<\tau_{B(x_0, 2r)}<\zeta)  = \E_x \left[ e_q(\tau_{B(x_0, 2r)})  {\bf 1}_{\{  \sigma_A<\tau_{B(x_0, 2r)}\}}\right] >0.
$$

(ii) Observe that
\beq{E:17.1}\barray
	\P_x\Big(\sigma^Z_A<\tau^Z_{B(x_0, 2r)}\Big)\ad \ge \P_x\Big(\sigma_A<\tau_{B(x_0, 2r)}; \tau_{B(x_0, 2r)}<\zeta\Big)
	 \\
\ad \ge \P_{x}\Big(\sigma_A<\tau_{B(x_0, 2r)}\Big)-\P_x\Big( \tau_{B(x_0, 2r)}\ge \zeta\Big)
\earray\eeq
For $A\subset B(x_0,  r)$ with $|A|\ge \rho r^d$, we have  $\P_{x}(\sigma_A<\tau_{B(x_0, 2r)})\ge \Phi\(\rho\).$
On the other hand,
$$
\P_{x}(\zeta> t) = \E_x\left[ \exp \left(- \int_0^t q(X_s) ds\right)\right] \ge  \exp(-\| q\|_\infty \, t).
$$
This combined with \eqref{e:3.15} yields  that
\bea
\P_{x } (\zeta> \tau_{B(x_0, 2r)} )\ad \ge \P_{x }  (\zeta>r>\tau_{B(x_0, 2r)}  )\\
\ad \ge \P_{x }  ( \zeta>r ) - \P_{x  }  ( \tau_{B(x_0, 2r)}\ge r )\\
\ad \ge \exp(-\|q\|_\infty r) - \dfrac{\E_{x }\tau_{B(x_0, 2r)}}{r} \\
\ad \ge \exp(- \| q\|_\infty r)-c_1r.
\eea
Since $\lim_{r\to 0} \left( \exp(-\| q\|_\infty r)-c_1r\right)=1$,
  there is a constant  $ r_0\in (0, 1/2]$  such that
  \beq{E:17.3}
  \P_{x}\Big( \tau_{B(x_0, 2r)}\ge \zeta\Big)\le \dfrac{1}{2}\Phi(\rho) \quad \text{for all }    r\in (0, r_0).
  \eeq
The desired conclusion follows from \eqref{E:17.1},
\eqref{E:17.2}, and \eqref{E:17.3}.
\end{proof}

\medskip

For a connected open subset $D\subset \R^d$ and a Borel measurable function $f\geq 0$ on $D$, define $G^q_D f(x)
=\E_x \left[\int_0^{\tau^Z_D} f(Z_s) ds\right]$.

\begin{proposition}\label{P:3.7} For $f\geq 0$, either $G^q_D f(x)>0$  on $D$ or $G^q_D f(x)\equiv 0$   on $D$.
Moreover, if $G^q_Df>0$ on $D$ if and only if $\{x\in D: f(x)>0\}$ has positive Lebesgue measure.
\end{proposition}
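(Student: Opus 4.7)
The plan is to prove two statements whose conjunction gives both the dichotomy and the ``if and only if'' assertion: (a) if $f=0$ Lebesgue-almost everywhere on $D$, then $G^q_D f\equiv 0$; and (b) if $|\{f>0\}|>0$, then $G^q_D f(x)>0$ for every $x\in D$.

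For part (a), the idea is to exploit the absolute continuity of the occupation measure of $Z$. Under (A1)--(A3), the uniform ellipticity of $a(\cdot)$ together with the martingale problem theory of \cite{Komatsu} ensures that $X$ admits a transition density with respect to Lebesgue measure on $\R^d$. Since $\P_x(Z_t\in B)\le \P_x(X_t\in B)$, the finite measure $B\mapsto \E_x[\int_0^{\tau^Z_D}\indi_B(Z_s)\,ds]$ is also absolutely continuous with respect to Lebesgue measure; consequently it vanishes on Lebesgue-null sets and $G^q_Df(x)=0$.

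For part (b), first pick $\e>0$ small enough that $A_\e:=\{x\in D:f(x)>\e\}$ has positive Lebesgue measure, and use the Lebesgue density theorem to choose $y_0\in A_\e$ and $r\in(0,\min(1/2,r_0)]$ (with $r_0$ from Proposition~\ref{P:4.6}(ii)) so that $B(y_0,2r)\subset D$ and $E:=A_\e\cap B(y_0,r)$ satisfies $|E|/r^d\ge \rho$ for some $\rho>0$. Since $G^q_Df(x)\ge \e\,\E_x[\int_0^{\tau^Z_D}\indi_E(Z_s)\,ds]$, it suffices to prove that the right-hand side is strictly positive at every $x\in D$. For $x\in B(y_0,r)$, I would use the Green-kernel representation $\E_x[\int_0^{\tau^Z_{B(y_0,2r)}}\indi_E(Z_s)\,ds]=\int_E G^q_{B(y_0,2r)}(x,z)\,dz$ provided by the absolute continuity from (a), together with the a.e.\ positivity of the Green density on $B(y_0,2r)\times B(y_0,2r)$ (which combines the hitting estimate of Proposition~\ref{P:4.6}(ii) with absolute continuity of the transition density of $Z$); since $|E|>0$, this integral is strictly positive. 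For $x\in D\setminus B(y_0,r)$, I would use a chaining argument: since $D$ is open and connected, cover a continuous path from $x$ to $y_0$ inside $D$ by a finite sequence of balls $B(x_j,r_j)$ with $B(x_j,2r_j)\subset D$ and consecutive balls overlapping, then iterate Proposition~\ref{P:4.6.2} with the strong Markov property to conclude that $Z$ started from $x$ reaches $B(y_0,r/2)$ before $\tau^Z_D$ with positive probability. The strong Markov property then reduces matters to the previous case.

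The main obstacle I foresee is upgrading the hitting-probability bounds of Proposition~\ref{P:4.6} to an honest occupation-time estimate, i.e.\ establishing that the Green density $G^q_{B(y_0,2r)}(x,\cdot)$ is strictly positive on a positive-measure subset of $E$. I plan to obtain this by combining the absolute continuity of the transition density of $Z$ on $B(y_0,2r)$ (already used in (a)) with an irreducibility statement for $Z$ inside $B(y_0,2r)$ obtained by iterating Proposition~\ref{P:4.6}(i); once this is in hand, the remainder of the argument is a routine assembly of strong Markov with the chaining step.
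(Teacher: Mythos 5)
Your outline for part (a) is sound and in fact somewhat cleaner than the paper's route, provided you pin down the absolute continuity of the transition density of $X$ under (A1)--(A3); the paper instead cites the Krylov-type estimate of Mikulevicius--Pragarauskas (their \cite[Corollary 2]{MP}) on small balls and then stitches balls together with a stopping-time decomposition $\tau_0<\tau_1<\dots\uparrow\tau_D$. Both are legitimate literature appeals.

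The gap is in part (b), exactly where you flag it. Your plan requires showing that the Green density $G^q_{B(y_0,2r)}(x,\cdot)$ is strictly positive a.e.\ on a positive-measure subset of $E$, and the tools you propose do not produce this. Proposition~\ref{P:4.6} gives only \emph{hitting} probabilities for $Z$, and iterating \ref{P:4.6}(i) still gives only hitting probabilities; absolute continuity of the occupation measure gives you \emph{existence} of a Green density, not positivity. To pass from ``$Z$ hits $K\subset E$ with positive probability'' to ``$Z$ spends positive expected time in $E$'' you would need to know the process stays near the hit point for a positive expected duration, which is precisely the occupation-time lower bound you are trying to prove --- the argument becomes circular without an independent lower bound on the transition/Green density.

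The paper avoids this by a reordering that you should adopt. First prove the \emph{dichotomy} as a standalone fact: if $A:=\{G^q_Df>0\}$ has positive Lebesgue measure, then $A=D$. This only uses the hitting estimate \ref{P:4.6}(i) and the strong Markov property,
$$
G^q_Df(x)\ \geq\ \E_x\bigl[\,G^q_Df(Z_{\sigma^Z_K})\,;\ \sigma^Z_K<\tau^Z_{B(x_0,r)}\bigr]\ >\ 0,
$$
propagated by the ball-chaining you already describe. Once the dichotomy is in hand, for the ``if'' direction of the equivalence it suffices to show $G^q_Df$ (or equivalently $G_{D,\lambda}(\indi_K f)$ for large $\lambda$) is not identically zero. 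The paper gets this from the resolvent theory in Komatsu: $v(x):=\E_x\int_0^\infty e^{-\lambda t}(\indi_Kf)(X_t)\,dt$ cannot vanish identically when $\indi_Kf$ is nontrivial, and a stopping-time decomposition transfers the non-triviality from $v$ to $G_{D,\lambda}(\indi_Kf)$. This replaces the Green-density-positivity you were stuck on with a much softer resolvent non-degeneracy statement, and the dichotomy then upgrades ``positive somewhere'' to ``positive everywhere.'' I would recommend restructuring your proof along these lines; the chaining machinery you built is not wasted --- it is exactly the engine of the dichotomy step.
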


\begin{proof} Suppose that $A:=\{x\in D: G^q_D f(x)>0\}$ has positive Lebesgue measure. We claim that
	for any $r\in (0, 1]$ and $B(x_0, r)\subset D$ so that $B(x_0, r/2)\cap A$ has positive Lebesgue measure, then $B(x_0, r/2)\subset A$.
	This is because if $B(x_0, r/2)\cap A$ has positive Lebesgue measure, then   there is a compact subset $K\subset B(x_0, r/2) \cap A$
	having positive Lebesgue measure. By  Proposition  \ref{P:4.6} (i), we have
$\P_x (\sigma^Z_K<\tau^Z_{B(x_0, r)} )>0$ for every $x\in B(x_0, r/2)$.
Consequently,
$$
G_D^q  f(x)= \E_x\int_0^{\tau^Z_D} f(Z_s)ds \geq \E_x \left[ G^q_D f(Z_{\sigma_K}); \sigma^Z_K<\tau^Z_{B(x_0, r)}\right]
>0
$$
for every $x\in B(x_0, r/2)$.  This proves the claim. Since $B(x_0, r/2)\subset A$, by a chaining argument, the above reasoning shows that $A=D$
if $A$ has positive Lebesgue measure.  Now assume that $G^q_D f=0$ a.e. on $D$.
Since $G^q_D f(x)=\E_x \int_0^{\tau_D} e_q(s) f(X_s)ds$, we have
$G_D f(x):= \E_x \int_0^{\tau_D}   f(X_s)ds=0$ a.e. on $D$.
In particular, $G_D (f\wedge n)=0$ a.e. on $D$.
By \cite[Theorem 2.3]{Foondun09},  bounded harmonic functions of $X$ is H\"older continuous.
By the proof of \cite[Proposition 3.3]{BKK}, this together with \eqref{e:3.15} implies that
$G_D (f\wedge n)$ is H\"older continuous on $D$. Therefore we have $G_D (f\wedge n) (x)=0$
for every $x\in D$. Consequently, $G_Df(x)=0$ for every $x\in D$ and so is $G^q_Df(x)$.
This proves the first part of the proposition.

For the second part of the proposition, suppose that $f\geq 0$ and  $f=0$ a.e. on $D$. It follows from \cite[Corollary 2]{MP} that for every $x_0\in \R^d$,
\begin{equation}\label{e:3.18}
\E_x \int_0^{\tau_{B(x_0, 1)} } ({\bf 1}_D f ) (X_s) ds =0 \quad \hbox{for every } x\in B(x_0, 1).
\end{equation}
We claim that $\E_x \int_0^{\tau_D} f(X_s) ds=0$ for every $x\in D$.
For this, we define a sequence of stopping times: $\tau_0:=0$, $\tau_1:=\inf\{t\geq 0: |X_t-X_0|\geq 1\}\wedge \tau_D$,
and for $n\geq 2$, $\tau_n :=\inf\{t\geq \tau_{n-1}: |X_t-X_{\tau_{n-1}}|\geq 1\}\wedge \tau_D$.
Note that on $\{\lim_{n\to \infty} \tau_n <\tau_D\}$,
$\lim_{n\to \infty} X_{\tau_n}=X_{\lim_{n\to \infty}} $ by the left-continuity of $X_t$.
On the other hand, the sequence $\{X_{\tau_n}; n\geq 1\}$ diverges on
$\{\lim_{n\to \infty} \tau_n <\tau_D\}$
as $|X_{\tau_n}-X_{\tau_{n-1}}|\geq 1$  by the definition of $\tau_n$.
This contradiction implies that $\P_x  (\lim_{n\to \infty} \tau_n <\tau_D)=0$;
 in other words, $\lim_{n\to \infty} \tau_n =\tau_D$   $\P_x$-a.s.
Consequently, we have by \eqref{e:3.18}
\begin{eqnarray*}
 \E_x \int_0^{\tau_D} f(X_s) ds &=& \E_x \sum_{n=1}^\infty \int_{\tau_{n-1}}^{\tau_n} f(X_s) ds  \\
&=&\sum_{n=1}^\infty \E_x \left[ \E_{X_{\tau_{n-1}}} \int_0^{\tau_{B(X_{\tau_{n-1}}, 1)}\wedge \tau_D} f(X_s)ds; \tau_{n-1}<\tau_D \right]
\\
&=& 0.
\end{eqnarray*}
It follows then $G^q_D f(x) = \E_x \int_0^{\tau_D} e_q(X_s) f(X_s) ds =0$ for every $x\in D$.
This proves that if $f\geq 0$ and $f=0$ a.e. on $D$, then $G^q_D f\equiv 0$ on $D$.
Next suppose that $f\geq 0$ is a bounded function on $\R^d$ and $\{x\in D: f(x)>0\}$ has positive Lebesgue measure,
we will show that $G^q_D(x)>0$ for every $x\in D$. Let $c_p>0$ be the constant in the Remark  following
Theorem 3.1 on p.282 of \cite{Komatsu}.
Using a localization argument if needed, we   may
assume that $|a_{ij}(x)-a_{ij}(y)| \leq 1/c_p$ for every $x, y\in \R^d$.
Let $K$ be a compact subset of $D$ so that $\{x\in K: f(x)>0\}$ has positive Lebesgue measure.
Then by Theorem 3.6 and the proof of Theorem 4.2 both in \cite{Komatsu},  for $\lambda>0$ large,
$v(x):=\E_x \int_0^\infty e^{-\lambda t} ({\bf 1}_K f)(X_s) ds $ is non-trivial on $\R^d$.
We define a sequence of stopping times as follows.
Let $S_1:=\sigma_K$, $T_1:=\inf\{t>\sigma_K: X_t\notin D\}$; for $n\geq 2$,
define $S_n:=\inf\{t>T_{n-1}: X_t\in K\}$ and $T_n:=\inf\{t>S_n: X_t\notin D\}$.
Then
$$
v(x)= \sum_{n=1}^\infty \E_x \int_{S_n}^{T_n} e^{-\lambda s} f(X_s) ds
= \sum_{n=1}^\infty \E_x \left[ e^{-\lambda S_n} G_{D, \lambda} ({\bf 1}_K f) (X_{S_n}) \right],
$$
where $G_{D, \lambda} \varphi (x):= \E_x \int_0^{\tau_D} e^{-\lambda s} \varphi (X_s) ds $.
Hence $G_{D, \lambda} ({\bf 1}_K f) (x) $ cannot be identically zero on $K$.
By the first part of this proof (by taking $q=\lambda$), we have $G_{D, \lambda} ({\bf 1}_K f) (x) >0$
for every $x\in D$. It follows that $G_Df(x)>0$ and so $G^q_Df (x)>0$ for every $x\in D$.
\end{proof}

\subsection{Maximum Principle for Switched Markov Processes}

Now we return to the setting of switched Markov process  $(X_t, \Lambda_t)$.
Let $D$ be a bounded open set in $\RR^d$ and $U=D\times \M$.
 Then $\tau_D=\inf\{t>0: X_t \notin D\}$ is the same as
$\tau_U:=\inf\{t>0: Y_t:=(X_t, \Lambda_t) \notin U\}$.
 Suppose $u$ is a $\mathcal{G}$-harmonic function in $U$. Under some mild assumptions
 (for example, when $u$ is bounded and continuous up to $\partial D\times \M$), we have
 \begin{equation}\label{E:10}
 u(x, i)=\E_{x, i} [ u\(X_{\tau_D}, \Lambda_{\tau_D}\)] \qquad \hbox{for } (x, i)\in U.
 \end{equation}
It follows immediately that if $u\geq 0$ on $U^c$, then $u\geq 0$ in $U$.

To proceed, we recall the notion of  irreducibility  of the generator $\G$ or the matrix function $Q\cd$.
The operator $\mathcal G$ or the matrix function $Q\cd$ is said to be \textit{irreducible on} $D$ if for any $i, j\in\M$, there exist $n=n(i,j)\geq1$ and $\Lambda_0, \dots, \Lambda_n\in\M$ with $\Lambda_{k-1}\neq\Lambda_k$ for $1\leq k\leq n$, $\Lambda_0=i, \Lambda_n=j$ such that
$\{x\in D: q_{\Lambda_{k-1}\Lambda_k}(x)>0 \}$
has positive Lebesgue measure
for $k=1, \dots, n$.

For each $i\in \M$, denote by $X^i$ the jump diffusion that solves the martingale problem  $(\LL_i, C^2_b(\R^d))$ and
$\wt X^i$ the subprocess of $X^i$ killed at rate $-q_{ii}(x)$.
 For a connected open set $D\subset \R^d$,  $  G^i_D$ denotes the Green operator of  $\wt X^i$ in $D$.

\begin{theorem}\label{T:3.8} Assume that conditions {\rm(A1)-(A3)} hold, that  $D$ is a bounded connected open set in $\R^d$,
and that $Q$ is irreducible on $D$.
 Suppose that $u$ is a $\mathcal{G}$-harmonic function in $U=D\times \M$ given by
$$
 u(x, i)=\E_{x, i} [ \phi \(X_{\tau_D}, \Lambda_{\tau_D}\); \tau_D<\infty] \qquad \hbox{for } (x, i)\in U
$$
 and $\phi \geq 0$ on $D^c\times \M$.
 Then either
 $u(x, i)>0$ for every $(x, i)\in U$ or $u\equiv 0$   on $U$.
\end{theorem}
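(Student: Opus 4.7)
The plan is to combine a one-step peeling of the switching component with the irreducibility of $Q$. The starting observation is the representation
\begin{equation}\label{E:peel}
u(x,i) \;=\; g_i(x) \;+\; \sum_{j\neq i} G^i_D\bigl[q_{ij}\,u(\cdot,j)\bigr](x), \qquad (x,i)\in U,
\end{equation}
where $\kappa_i:=-q_{ii}\ge 0$, $g_i(x):=\E_x\bigl[e_{\kappa_i}(\tau^i_D)\,\phi(X^i_{\tau^i_D},i)\bigr]$, and $\tau^i_D$ is the first exit time of $X^i$ from $D$. This is obtained from the construction of $(X,\Lambda)$ in \remref{R:2.2}(a) by applying the strong Markov property at the first switching time $\tau_1$: conditional on $X^i$, $\tau_1$ is the first arrival of an inhomogeneous Poisson clock of intensity $\kappa_i(X^i_s)\,ds$, and the post-jump regime is sampled independently with probabilities $q_{ij}(X^i_{\tau_1-})/\kappa_i(X^i_{\tau_1-})$ for $j\neq i$.

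The second step is a per-regime dichotomy: for each fixed $i\in\M$, either $u(\cdot,i)\equiv 0$ on $D$ or $u(\cdot,i)>0$ everywhere on $D$. Suppose $u(x_0,i)>0$ at some $x_0\in D$. By \eqref{E:peel} at least one summand on the right-hand side is strictly positive at $x_0$. If some Green term $G^i_D[q_{ij_0}u(\cdot,j_0)](x_0)$ is positive, \propref{P:3.7} lifts the positivity to all of $D$, so $u(\cdot,i)>0$ on $D$. Otherwise $g_i(x_0)>0$, and I would argue that $g_i$ is continuous in $D$ (it solves the Dirichlet problem for $\LL_i-\kappa_i$, and bounded harmonic functions of $X^i$ are H\"older continuous by the result of \cite{Foondun09} already cited in the proof of \propref{P:3.7}), so $g_i$ is bounded below by a positive constant on some ball $B\subset D$. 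A chain of balls joining an arbitrary $y\in D$ to $B$ inside $D$, combined with repeated application of \propref{P:4.6.2} and the strong Markov property of $\wdt X^i$, then yields $g_i(y)>0$. In either case $u(\cdot,i)\ge g_i>0$ on $D$.

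The last step patches the regimes together via the irreducibility of $Q$. Set $I:=\{i\in\M:u(\cdot,i)>0 \text{ on } D\}$; by the per-regime dichotomy, $u(\cdot,i)\equiv 0$ on $D$ for every $i\in\M\setminus I$. Suppose for contradiction that both $I$ and $\M\setminus I$ are nonempty and fix any $i\in\M\setminus I$. Then every summand on the right-hand side of \eqref{E:peel} vanishes on $D$, and by the second half of \propref{P:3.7} the set $\{x\in D:q_{ij}(x)u(x,j)>0\}$ has Lebesgue measure zero for every $j\neq i$. Taking $j\in I$, on which $u(\cdot,j)>0$ throughout $D$, this set reduces to $\{x\in D:q_{ij}(x)>0\}$, which must therefore be Lebesgue null. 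But irreducibility of $Q$ on $D$ supplies a chain $i=\Lambda_0,\ldots,\Lambda_n=j$ with each $\{x\in D:q_{\Lambda_{k-1}\Lambda_k}(x)>0\}$ of positive measure, and some edge of this chain must cross from $\M\setminus I$ into $I$, contradicting what was just shown. Hence $I\in\{\emptyset,\M\}$, which is precisely the desired dichotomy.

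The main obstacle, in my view, is the $g_i$ branch of the per-regime dichotomy: \propref{P:3.7} applies to Green potentials but not directly to the Poisson-type integral $g_i$. One has to import continuity of $g_i$ from regularity theory for the nonlocal operator $\LL_i-\kappa_i$ and then run a chaining argument based on \propref{P:4.6.2}. If the cleanest form of such continuity turned out to be unavailable at the level of coefficient regularity assumed here, a robust fallback would be to work with super-level sets $\{g_i>\varepsilon\}$ of positive Lebesgue measure and invoke \propref{P:4.6}(ii) instead.
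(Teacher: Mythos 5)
Your overall strategy is essentially the paper's: the same one-step peeling of the switching via the strong Markov property at the first switching time, the same appeal to \propref{P:3.7} for the Green-potential dichotomy, and the same chaining via Foondun-type hitting estimates. Two organizational differences are worth noting. The paper splits the proof into (a) ``positive measure where positive implies positive everywhere'' and (b) ``a.e.\ zero implies identically zero'' and combines them; you fuse these into a per-regime dichotomy started from a \emph{single} point where $u(\cdot,i)>0$. Your regime-patching by contrapositive (an edge of the irreducibility chain crossing $\M\setminus I$ into $I$ would force a $q_{\Lambda_{k-1}\Lambda_k}$ to vanish a.e.) is somewhat tidier than the paper's explicit iteration of the representation, which writes out a long composite $G_D^i(q_{ii_1}(G_D^{i_1}q_{i_1i_2}(\cdots)))$ before invoking \propref{P:3.7}; the two are logically equivalent uses of \propref{P:3.7}.

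The gap you flag is genuine, but your proposed fix (continuity of $g_i$) and fallback (super-level sets) do not quite close it. The function $g_i(x)=\E_x\bigl[e_{\kappa_i}(\tau_D)\phi(X^i_{\tau_D},i)\bigr]$ is harmonic for the \emph{killed} process $\wt X^i$, not for $X^i$, so the H\"older continuity of \cite[Theorem 2.3]{Foondun09}, which concerns harmonic functions of $X^i$, does not apply to it directly. The fallback via super-level sets $\{g_i>\varepsilon\}$ also does not resolve the single-point issue: absent some regularity of $g_i$ one cannot pass from $g_i(x_0)>0$ to $|\{g_i>\varepsilon\}|>0$, which is what \propref{P:4.6}(ii) needs. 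The paper's device, which you should adopt, is to dominate $g_i(x)\leq \wt u_i(x):=\E_x\bigl[\phi(X^i_{\tau_D},i)\bigr]$ by the un-killed Poisson integral, which \emph{is} $X^i$-harmonic and hence H\"older continuous; thus $\wt u_i(x_0)\geq g_i(x_0)>0$ gives $\wt u_i>0$ on a ball, the chaining argument with \propref{P:4.6} then yields $\wt u_i>0$ on all of $D$, and one transfers back to $g_i$ using that the Feynman--Kac weight $e_{\kappa_i}(\tau_D)$ is strictly positive a.s. (since $\kappa_i$ is bounded and $\tau_D<\infty$ a.s.\ by \propref{L:4.5}). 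With this substitution your per-regime dichotomy is correct and the rest of the argument goes through.
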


\begin{proof}  Clearly $u\geq 0$ on $U$. Suppose that $u$ is not  a.e. zero on $U$. Without loss of generality,
let us assume that $\{x\in D: u(x, 1)>0\}$ has positive Lebesgue measure.
Denote by
$\tau_1:=\inf\{t>0: \Lambda_t \not= \Lambda_0\}$
  the first switching time for $Y_t=(X_t, \Lambda_t)$.
Let
$$v_i(x):= v (x, i): = \E_{x, i} [ \phi (  \wt X^i_{\tau_D}, i) ] = \E_{x, i} [ \phi (  X^i_{\tau_D}, i); \tau_D <\tau_1] .
$$
 Then
$v_i$ is a harmonic function of $\LL_i+q_{ii}$ in $D$ with $v_i= \phi (\cdot, i)$
on $D^c$.
For $1\leq i\leq m$, using the strong Markov property $\tau_1$, we have
\begin{equation}\label{E:12}
u(x, i) = v_i (x) + \sum_{j=1\atop{j\not=i}}^m   G_D^i (q_{ij} u(\cdot, j)) (x).
\end{equation}
 Under the above assumption,
  either $ \{x\in D: v_1 (x)>0\} $ or $\{x\in D:   G^1_D (\sum_{j=1\atop{j\not=i}}^m  q_{ij} u(\cdot, j)) (x)>0\}$
  has positive Lebesgue measure.
If the latter happens, then by Proposition \ref{P:3.7},  \hfill \break
$G^1_D (\sum_{j=1\atop{j\not=i}}^m  q_{ij} u(\cdot, j))  (x) >0$ and
hence $u(x, 1)>0$ for every $x\in D$.
Note that
\begin{equation}\label{e:3.22}
v_i (x)
= \E_{x} [ e_{-q_{ii}}(\tau_D) \phi (  X^i ({\tau_D}), i)] \leq \E_{x} [   \phi (  X^i ({\tau_D}), i)] =:
   \wt u_i(x) .
  \end{equation}
 Suppose $|\{x\in D: v_1 (x)>0\}|>0$. Then so does $A:=\{x\in D: \wt u_1 (x)>0\}$.
For any  $x_0\in D$ and $r\in (0, 1)$ so that $B(x_0, r)\subset D$ and $B(x_0, r/2)\cap A$
has positive Lebesgue measure, let $K\subset B(x_0, r/2)\cap A$ be a compact set having positive Lebesgue measure.
By \eqref{E:17.2}, $\P_x (\sigma^1_K <\tau^1_{B(x_0, r))}>0$ for every $x\in B(x_0, r/2)$, where
$\sigma^1_K :=\inf\{t\geq 0: X^1_t \in K\}$ and $\tau^1_{B(x_0, r)}:=\inf\{t\geq 0: X^1_t\notin B(x_0, r)\}$.
Hence for every $x\in B(x_0, r/2)$, by the strong Markov property of $X^i$ at $\sigma^1_K$,
$$ \wt u_1(x)\geq \E_x [ \wt u_1 (X^i_{\sigma^1_K}); \sigma^1_K <\tau_D] >0.
$$
Consequently, $B(x_0, r/2)\subset A$.
By the chaining argument, the same reasoning as above leads to $A=D$; that is, $\wt u_1(x)>0$ on $D$.
 By the probabilistic representation \eqref{e:3.22} of $v_1$, we have $v_1(x)>0$ on $D$ and hence $u(x, 1)>0$ on $D$.
Thus we have shown that  $u(x, 1)>0$ on $D$ whenever
$\{x\in D: u (x, 1)>0\}$ has positive Lebesgue measure.

For $i\not = 1$, there is a self-avoiding path $i=i_0, \dots,
i_{n}=i$ so that
$\{x\in D: q_{i_{k-1}i_k}(x)>0\}$
 has
 positive Lebesgue measure for each $k=1, \dots, n$.
 By \eqref{E:12} and its iteration, we have
\begin{eqnarray*}
u(x, i) &=& v_i (x) + \sum_{k=1}^n \sum_{l_1, \dots, l_k=1\atop{l_1\not=1, \l_2\not= l_1, \dots, l_k\not=l_{k-1}}}^m
 G_D^i(q_{il_1} (G_D^{l_1} q_{l_1l_2}(\cdots  ( G_D^{l_{k-1}}q_{l_{k-1}l_k} v_{l_k})) \cdots )))  (x) \\
&& + \sum_{l_1, \dots, l_n=1\atop {l_1\not=1, \l_2\not= l_1, \dots, l_n\not=l_{n-1}}}^m
 G_D^i (q_{il_1} ( G_D^{l_1}q_{l_1l_2} ( \cdots (G_D^{l_{n-1}} q_{l_{n-1}l_n} u(\cdot, l_n))
 \cdots ))) (x)\\
 &\geq &  G_D^i (q_{ii_1} ( G_D^{i_1}q_{i_1i_2} ( \cdots (G_D^{i_{n-1}} q_{i_{n-1}i_n} u(\cdot, i) ) \cdots ))) (x),
\end{eqnarray*}
which is strictly positive in $D$ by Proposition \ref{P:3.7}.

Next assume that $u=0$ a.e. on $U$. We claim that $u\equiv 0$ on $U$. In view of
\eqref{E:12} and Proposition \ref{P:3.7}, it suffices to show that $v_i(x)\equiv 0$ on $D$
for every $i\in \M$. Since
$$
v_i(x)=  \E_{x, i} \left[ \phi (  X^i_{\tau_D}, i); \tau_D <\tau_1 \right]
 = \E_{x, i} \left[ e^{(i)}_{-q_{ii}}(\tau_D)\phi (  X^i_{\tau_D}, i) \right],
$$
and $v_i(x)=0$ a.e. on $D$, where $$e^{(i)}_{-q_{ii}} (t):=\exp( \int_0^t q_{ii}(X^i_s)ds),$$
we have
$u_i(x):= \E_{x, i}\left[  \phi (  X^i_{\tau_D}, i) \right]$ vanishes a.e. on $D$.
The function $u_i(x)$ is harmonic in $D$ with respect to $X^i$ (or equivalently, with respect to the operator
$\LL_i$ in $D$).   By \cite[Theorem 2.3]{Foondun09},
 it is H\"older continuous in $D$. Hence $u_i(x)=0$ for every $x\in D$,
and so is $v_i(x)$. This proves that $u(x, i)=0$ for every $x\in D$ and every $i\in \M$.
\end{proof}

 \begin{theorem}\label{T:3.9} {\rm (Strong Maximum Principle I)}
 Assume conditions {\rm(A1)-(A3)} hold, $D$ is a bounded
 connected open set  in $\R^d$, and $Q(x)$ is irreducible on $D$. Suppose
 $u$ is a $\mathcal{G}$-harmonic function in $U=D\times \M$ given by
$$
 u(x, i)=\E_{x, i} [ \phi \(X_{\tau_D}, \Lambda_{\tau_D}\)] \qquad \hbox{for } (x, i)\in U
$$
for some $\phi $ with $M:= \sup_{(y, j)\in D^c \times \M} \phi (y, j)\in [0, \infty)$.
 If   $(x_0, i_0)\in D\times \M$ and $u(x_0, i_0) =M$,  then $u\equiv M$ on $D\times \M$.
If in addition $M>0$, then the matrix $Q(x)$ is Markovian.
\end{theorem}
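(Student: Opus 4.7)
The plan is to apply the strict-positivity dichotomy of \thmref{T:3.8} to the auxiliary $\G$-harmonic function $\wt u(x,i):=\E_{x,i}[(M-\phi)(X_{\tau_D},\Lambda_{\tau_D})]$, and then to read off the Markovian conclusion via a Feynman--Kac representation combined with the chaining argument from the proof of \thmref{T:3.8}.

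Since $M-\phi\ge 0$ on $D^c\times\M$, $\wt u$ is nonnegative on $U$ and fits the framework of \thmref{T:3.8}. Adding the two probabilistic representations gives
\[
u(x,i)+\wt u(x,i)=M\,\P_{x,i}(\tau_D<\zeta)\le M \qquad \text{for every } (x,i)\in U.
\]
If $M=0$, then $\phi\le 0$ and $-u(x,i)=\E_{x,i}[(-\phi)(X_{\tau_D},\Lambda_{\tau_D})]$ meets the hypothesis of \thmref{T:3.8} with $-u(x_0,i_0)=0$, so the dichotomy forces $u\equiv 0=M$ on $U$. Assume henceforth $M>0$. The identity $u(x_0,i_0)=M$ combined with $\wt u\ge 0$ and $u+\wt u\le M$ forces $\wt u(x_0,i_0)=0$ and $\P_{x_0,i_0}(\tau_D<\zeta)=1$. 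Applying \thmref{T:3.8} to $\wt u$ yields $\wt u\equiv 0$ on $U$, and so
\[
u(x,i)=M\,\P_{x,i}(\tau_D<\zeta) \qquad \text{for every } (x,i)\in U.
\]

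It remains, under $M>0$, to promote the single-point identity $\P_{x_0,i_0}(\tau_D<\zeta)=1$ to every point of $U$, which is equivalent to showing that $k_j(x):=-\sum_{l\in\M}q_{jl}(x)$ vanishes a.e.\ on $D$ for each $j\in\M$; i.e., $Q$ is Markovian on $D$. Let $(X^{\ast},\Lambda^{\ast})$ be the Markovianised (row-sum-zero) switched process obtained by replacing $q_{ii}$ by $-\sum_{l\ne i}q_{il}$, so that $(X,\Lambda)$ equals $(X^{\ast},\Lambda^{\ast})$ killed at rate $k$. By the Feynman--Kac formula,
\[
\P_{x,i}(\tau_D<\zeta)=\E^{\ast}_{x,i}\!\left[\exp\!\left(-\int_0^{\tau^{\ast}_D}k(X^{\ast}_s,\Lambda^{\ast}_s)\,ds\right)\right],
\]
so equality to $1$ at $(x_0,i_0)$ and nonnegativity of $k$ force
$\E^{\ast}_{x_0,i_0}\bigl[\int_0^{\tau^{\ast}_D}k(X^{\ast}_s,\Lambda^{\ast}_s)\,ds\bigr]=0$. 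Expanding this Green-type identity by iterating the decomposition \eqref{E:12} along self-avoiding chains $i_0=j_0,\dots,j_n=j$ in $\M$ furnished by the irreducibility of $Q$, and invoking \propref{P:3.7} (together with the positive-probability reachability from \propref{P:4.6}) on each pure-diffusion leg $X^{j_k}$, we conclude $k_j\equiv 0$ a.e.\ on $D$ for every $j\in\M$. Then $\zeta>\tau_D$ $\P_{x,i}$-a.s.\ for every $(x,i)\in U$, so $\P_{x,i}(\tau_D<\zeta)=1$ on $U$ and $u\equiv M$ follows.

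The hard part is this final chaining step, which runs the chaining argument of \thmref{T:3.8} in reverse: rather than spreading strict positivity of a $\G$-harmonic function from one state to all states, we must propagate the vanishing of an integrated killing rate. The key technical observation is that along each chain $i_0=j_0,j_1,\dots,j_n=j$ with $q_{j_{k-1}j_k}$ strictly positive on a set of positive Lebesgue measure, the iterated composition $G^{j_0}_D(q_{j_0j_1}G^{j_1}_D(\cdots(q_{j_{n-1}j_n}\,\cdot)\cdots))$ defines, by \propref{P:3.7}, a strictly positive functional against any nonnegative $k_j$ that is not a.e.\ zero on $D$, so its vanishing at $(x_0,i_0)$ forces $k_j=0$ a.e.\ on $D$.
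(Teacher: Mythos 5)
Your proof is correct and follows essentially the same route as the paper: both apply \thmref{T:3.8} to the auxiliary nonnegative harmonic function built from $M-\phi$, deduce the survival representation $u(x,i)=M\,\P_{x,i}(\tau_D<\zeta)=M\,\E^{\ast}_{x,i}[\bar e_\kappa(\tau_D)]$, and then propagate the vanishing of the killing rate $\kappa$ across planes using \propref{P:3.7} together with irreducibility. You merely reorganize the paper's two-stage (Markovian, then sub-Markovian) presentation and its explicit expansion of $\bar e_\kappa(\tau_D)$ into a single Feynman--Kac survival-probability identity, but the key decomposition and lemmas are the same.
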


\begin{proof}
(i) First we assume that $Q(x)$ is Markovian in the sense that $\sum_{j\in \M} q_{ij}(x)=0$
a.e. on $\R^d$ for every $i\in \M$. In this case, by the construction of the switched Markov process
$\( X_t, \Lambda_t\)$ outlined in Remark \ref{R:2.2}(a), $\( X_t, \Lambda_t\)$ has infinite lifetime and so
constant 1 is a $\G$-harmonic function on $\R^d$. Hence
$$
M-u(x, i)= \E_{x, i} \left[ (M-\phi)  (X_{\tau_D}, \Lambda_{\tau_D}); \tau_D<\infty \right]
 $$
 is a non-negative $\mathcal{G}$-harmonic function in $D\times \M$.
 (Note that $\tau_D<\infty$ $\P_{x,i}$-a.s. in view of Proposition \ref{L:4.5} below.)
Since $ u(x_0, i_0)=M$, we have by Theorem \ref{T:3.8} that  $u(x, i)=M$ for every
$(x, i)\in U$.

(ii) We now consider the general case that $Q(x)$ is a sub-Markovian matrix.
Define a Markovian matrix $\bar Q(x)=(\bar q_{ij}(x))$
by taking
$\bar q_{ij}(x)=q_{ij}(x)$ and $\bar q_{ii}(x)=-\sum_{j\in \M\setminus \{i\}} q_{ij}(x)$.
 Let $(\bar X_t, \bar \Lambda_t)$ be the conservative switched Markov process
corresponding to $\bar \G$ as in \eqref{E:1} but with $\bar Q(x)$ in place of $Q(x)$.
The original switched Markov process $(X_t, \Lambda_t)$ can be viewed as a subprocess
of $ \(\bar X_t, \bar \Lambda_t\)$ killed at rate $\kappa (x, i)=\bar q_{ii}(x)-q_{ii}(x)$;
that is, for every $\psi (x, i)\geq 0 $ on $\R^d\times \M$,
$$
\E_{x, i} [ \psi (X_t, \Lambda_t)] =\E_{x, i} \left[ \bar e_\kappa (t) \psi (\bar X_t, \bar \Lambda_t) \right],
$$
where $\bar e_\kappa (t)=\exp  (-\int_0^t \kappa (\bar X_s, \bar \Lambda_s) ds  )$.
We consider
\begin{equation}\label{e:3.12a}
v(x, i):=\E_{x, i} \left[ (M-\phi)  (X_{\tau_D}, \Lambda_{\tau_D}); \tau_D<\infty \right],
\end{equation}
which is a non-negative $\mathcal{G}$-harmonic function in $D\times \M$.
We can rewrite $v(x, i)$ on $U$ as
\begin{equation}\label{e:3.13a}
v(x, i)=
 \E_{x, i} [ \bar e_{\kappa} (\tau_D) (M- \phi) (\bar X_{\tau_D}, \bar \Lambda_{\tau_D} )].
\end{equation}
 Since $u(x_0, i_0)=M\geq 0$,
 $$
 0\leq v(x_0, i_0)
 = M \P_{x_0, i_0}(\tau_D <\infty)- u(x_0, i_0)\leq 0,
 $$
  that is, $v(x_0, i_0)=0$.
 Thus by Theorem \ref{T:3.8},  $v(x, i)\equiv 0 $ on $D\times \M$.
 This implies by \eqref{e:3.13a} that $  (M- \phi) (\bar X_{\tau_D}, \bar \Lambda_{\tau_D} )=0$
 $\P_{x, i}$-a.s. for every $(x, i)\in D\times \M$. Consequently, we have
 \begin{equation}\label{e:3.14a}
  u(x, i)= M \E_{x, i} \left[ \bar e_\kappa (\tau_D) \right] \quad \hbox{for } (x, i)\in D\times \M
 \end{equation}
   and so
 \begin{eqnarray*}
 u(x, i)&=& M + M \E_{x,i}[  \bar e_{\kappa} (\tau_D) -1)  ] \\
 &=& M - \E_{x,i} \left[  \int_0^{\tau_D} \kappa ( \bar X_s, \bar \Lambda_s) \exp\left(
 -\int_s^{\tau_D} \kappa  (\bar X_r, \bar \Lambda_r) dr \right)   ds \right] \\
  &=& M-  M \E_{x,i} \left[  \int_0^{\tau_D}  \kappa   ( \bar X_s, \bar \Lambda_s)  ds \right].
   \end{eqnarray*}
 Let $\tau_1:=\inf\{t>0: \bar \Lambda_t\not= \bar \Lambda_0\}$ and denote by $\bar G^i_D$ be the Green function
 of $\LL_i+\bar q_{ii}$ in $D$.
 Since $u(x_0, i_0)=M$, we have by the strong Markov property and the construction of $(\bar X_t, \bar \Lambda_t)$ in
 Remark \ref{R:2.2}(a),
   \begin{eqnarray}
0 &=&   \E_{x_0,i_0} \left[  \int_0^{\tau_D}  \kappa  ( \bar X_s, \bar \Lambda_s)  ds\right]
\nonumber \\
&\geq&  \E_{x_0,i_0} \left[  \int_0^{\tau_D\wedge \tau_1} \kappa  ( \bar X_s, i_0)  ds \right]
=  \bar G^{i_0}_D (\kappa (\cdot, i_0))(x_0) \geq 0.  \label{e:3.12}
 \end{eqnarray}
Thus   $\bar G^{i_0}_D (\kappa (\cdot, i_0))(x_0) =0$ and so by  Proposition \ref{P:3.7} we have
 $\kappa (x, i_0)=0$ a.e. on $D$.
 Observe that
 \begin{eqnarray*}
 u(x_0, i_0) &=&\E_{x_0, i_0} [ \bar e_{\kappa} (\tau_D) \phi (\bar X_{\tau_D}, \bar \Lambda_{\tau_D})] \\
 &=& \E_{x_0, i_0} [ \phi (\bar X_{\tau_D}, \bar \Lambda_{\tau_D} ); \tau_D < \tau_1]
     +  \E_{x_0, i_0} [ \bar e_{\kappa} (\tau_D)  \phi (\bar X_{\tau_D}, \bar \Lambda_{\tau_D} ); \tau_1\leq  \tau_D  ].
\end{eqnarray*}
 Hence using the strong Markov property, we have
  \begin{eqnarray*}
 0&=&(M -u) (x_0, i_0)
 =  \E_{x_0, i_0} [ (M-M\bar e_{\kappa} (\tau_D)) ]\\
 &=&    \E_{x_0, i_0} [ (M-M\bar e_{\kappa} (\tau_D))  ; \tau_1\leq  \tau_D  ]\\
 &=& \E_{x_0, i_0} \left[  (M-u) (\bar X_{\tau_1}, \bar \Lambda_{\tau_1} ); \tau_1\leq  \tau_D  \right] \\
 &=& \sum_{j\in \M\setminus \{i_0\}}  \E_{x_0,i_0} \left[
 (M-u) (\bar X_{\tau_1-},   j))
  (q_{i_0j}/\bar q_{i_0i_0})  (\bar X_{\tau_1-}) ; \tau_1\leq  \tau_D \right]  \\
&  =&  \sum_{j\in \M\setminus \{i_0\}}  \bar G^{i_0}_D ( q_{i_0j} (M -u)(x_0).
\end{eqnarray*}
 By Proposition \ref{P:3.7} again, we have $\sum_{j\in \M\setminus \{i_0\}}  q_{i_0j} (M -u)=0$ a.e. on $D$.
 Since   $\bar Q$ is irreducible on $D$,  for any $j\not=i_0$, there is a self-avoiding path $\{j_0=i_0, j_1, \dots, j_n=j\}$
 so that   $\{x\in D: q_{j_kj_{k+1}} (x)>0\}$ having positive Lebesgue measure for $k=0. 1, \dots, n-1$.
 Thus we have $u(x, j_1)=M$ on $\{x\in D: q_{i_0j_1} (x)>0\}$.
 By the argument above, this implies that $\kappa (x, j_1)=0$ a.e. on $D$.
 Continuing as this, we get $\kappa (x, j_k)=0$ a.e. on $D$ and $\{x\in D: u(x, j_k)=M\}$ has positive Lebesgue measure
 for $k= 2, \dots, n$. This proves that $\kappa (x, i)=0$ a.e. on $D$ for every $i\in \M$ and so $u(x, i)=M$
 for every $x\in D$ in view of \eqref{e:3.14a}.
 \end{proof}

Before presenting the next version of strong maximum principle, we first prepare a lemma.

\begin{lemma}\label{L:3.6}
 Assume conditions {\rm(A1)-(A3)} hold,  $D$ is a bounded connected open set in $\R^d$,
and $Q(x)$ is irreducible on $D$. For any $\phi \geq 0 $ on $D\times \M$, either
$\E_{x,i} \int_0^{\tau_{D}} \phi (X_s, \Lambda_s) ds >0$ for every $(x, i)\in D\times \M$
or $\E_{x,i} \int_0^{\tau_{D}} \phi (X_s, \Lambda_s) ds \equiv 0$ on $D\times \M$.
\end{lemma}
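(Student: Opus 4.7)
The plan is to mimic the decomposition \eqref{E:12} used in the proof of \thmref{T:3.8}, but applied to $U(x,i):=\E_{x,i}\int_0^{\tau_D}\phi(X_s,\Lambda_s)\,ds$ instead of to a harmonic function, and then let \propref{P:3.7} together with irreducibility do all the heavy lifting. Using the construction of $(X_t,\Lambda_t)$ recalled in \remref{R:2.2}(a), the first switching time $\tau_1:=\inf\{t>0:\Lambda_t\ne\Lambda_0\}$ is exactly the lifetime of the killed subprocess $\wt X^i$ of $X^i$, and conditional on $\tau_1<\tau_D$ the post-switching state is $j\ne i$ with probability $q_{ij}(X^i_{\tau_1-})/(-q_{ii}(X^i_{\tau_1-}))$. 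A direct strong Markov computation, paralleling the derivation of \eqref{E:12}, then yields the identity
\begin{equation}\label{eq:L36dec}
 U(x,i)=G^i_D(\phi(\cdot,i))(x)+\sum_{j\ne i}G^i_D\bigl(q_{ij}\,U(\cdot,j)\bigr)(x),\qquad (x,i)\in D\times\M.
\end{equation}
Because $q_{ij}\ge 0$ for $i\ne j$ and $U,\phi\ge 0$, every term is non-negative, so I may freely drop terms.

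For the first horn of the dichotomy, suppose $\phi(\cdot,j)=0$ Lebesgue-a.e.\ on $D$ for every $j\in\M$. By \propref{P:3.7} applied to $\wt X^i$, $G^i_D(\phi(\cdot,i))\equiv 0$ on $D$ for each $i$. To conclude $U\equiv 0$, I would decompose the occupation integral over the successive switching intervals $[\tau_k\wedge\tau_D,\tau_{k+1}\wedge\tau_D]$ and apply the strong Markov property at $\tau_k$ to get
\[
 \E_{x,i}\!\!\int_{\tau_k\wedge\tau_D}^{\tau_{k+1}\wedge\tau_D}\!\!\phi(X_s,\Lambda_s)\,ds=\E_{x,i}\bigl[G^{\Lambda_{\tau_k}}_D(\phi(\cdot,\Lambda_{\tau_k}))(X_{\tau_k});\,\tau_k<\tau_D\bigr]=0,
\]
since $X_{\tau_k}\in D$ on $\{\tau_k<\tau_D\}$. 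Summing on $k\ge 0$ gives $U(x,i)=0$ at every $(x,i)\in D\times\M$.

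For the second horn, suppose $\phi$ is not Lebesgue-a.e.\ zero on $D\times\M$; then some $i_0\in\M$ satisfies $|\{x\in D:\phi(x,i_0)>0\}|>0$. \propref{P:3.7} gives $G^{i_0}_D(\phi(\cdot,i_0))>0$ pointwise on $D$, so \eqref{eq:L36dec} forces $U(\cdot,i_0)>0$ on $D$. For any other $j\in\M$, I would invoke the irreducibility of $Q$ on $D$ to pick a self-avoiding chain $j=k_0,k_1,\dots,k_n=i_0$ with $|\{x\in D:q_{k_{l-1}k_l}(x)>0\}|>0$ for each $l$. Dropping all but one term in \eqref{eq:L36dec} with $i=k_{n-1}$ gives $U(\cdot,k_{n-1})\ge G^{k_{n-1}}_D(q_{k_{n-1}i_0}U(\cdot,i_0))$, and since $q_{k_{n-1}i_0}\,U(\cdot,i_0)$ is strictly positive on a set of positive Lebesgue measure, \propref{P:3.7} forces $U(\cdot,k_{n-1})>0$ on $D$. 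Iterating this step backwards along the chain yields $U(\cdot,k_l)>0$ on $D$ for $l=n-1,n-2,\dots,0$, so in particular $U(\cdot,j)>0$ on $D$. The only delicate point of the plan is the careful identification in \eqref{eq:L36dec} of the expected integral up to $\tau_1\wedge\tau_D$ with the Green kernel $G^i_D(\phi(\cdot,i))$ of the killed subprocess $\wt X^i$, which requires correctly bookkeeping the multiplicative survival factor $\exp(\int_0^s q_{ii}(X^i_r)\,dr)$; once this is in place, the rest is a purely combinatorial application of \propref{P:3.7} together with irreducibility.
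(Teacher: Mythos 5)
Your proposal is correct and follows essentially the same route as the paper: both derive the decomposition (in your labeling \eqref{eq:L36dec}, the paper's \eqref{e:3.13}) via the strong Markov property at the first switching time $\tau_1$, and both let Proposition~\ref{P:3.7} plus irreducibility of $Q$ propagate positivity or vanishing between the planes along a self-avoiding chain. The only cosmetic difference is that you organize the dichotomy on the support of $\phi$ (first horn: $\phi=0$ a.e.\ so $U\equiv0$; second horn: $\phi\not\equiv0$ a.e.\ so $U>0$ everywhere), whereas the paper assumes $v(x_0,i_0)=0$ for some point and propagates the zero outward using \eqref{e:3.13}; the two formulations are equivalent in light of Proposition~\ref{P:3.7}.
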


\begin{proof} Denote by $ G^i_D$   the Green function of $\LL_i+\bar q_{ii}$ in $D$.
Using the strong Markov property at the first switching time
$\tau_1:=\inf\{t\geq 0: \Lambda_t \not= \Lambda_{t-}\}$ in a similar way to that for \eqref{e:3.12},
we have for every   $(x, i)\in D\times \M$,
\begin{eqnarray}
v(x, i)&:=&  \E_{x,i} \int_0^{\tau_{D}} \phi (X_s, \Lambda_s) ds \nonumber \\
&=&\E_{x,i} \int_0^{\tau_{D}\wedge \tau_1} \phi (X_s, \Lambda_s) ds
+ \E_{x,i} \left[ \int_{\tau_1}^{\tau_{D}\wedge \tau_1} \phi (X_s, \Lambda_s) ds; \tau_1 <\tau_D \right]
   \nonumber \\
&=& G^{i}_{D} (\phi (\cdot, i))(x) + \E_{x,i} \left[ v(X_{\tau_1}, \Lambda_{\tau_1}); \tau_1<\tau_D \right]
 \nonumber \\
&=& G^{i}_{D} (\phi (\cdot, i))(x) + \sum_{k\in \M \setminus \{i\}}
\E_{x,i} \left[ v(X_{\tau_1 -}, k) (q_{ik}/q_{ii})(X_{\tau_1-}); \tau_1<\tau_D \right]
\nonumber \\
&=&      G^{i}_{D} (\phi (\cdot, i))(x)  + \sum_{k\in \M\setminus \{i\}}     G^{i}_{D}
\left( q_{ik}   v (\cdot,   k) \right) (x), \label{e:3.13}
\end{eqnarray}
where the last identity is due to \cite[p.286]{Sharpe}; see the proof of \cite[Proposition 2.2]{CZ}.

Suppose $v(x_0, i_0)=0$ for some $(x_0, i_0)\in D\times \M$.
 Then by  Proposition \ref{P:3.7}, $v(x, i_0) \equiv 0$ on $D$.
For any $j\in \M\setminus \{i_0\}$,
since $Q(x)$ is irreducible on $D$,   there is a self-avoiding path $\{j_0=i_0, j_1, \dots, j_n=j\}$
 so that   $\{x\in D: q_{j_kj_{k+1}} (x)>0\}$ having positive Lebesgue measure for $k=0, 1, \dots, n-1$.
 It follows from \eqref{e:3.13} and its iteration that
 $$
 0=v(x_0, i_0)\geq   G^{i_0}_D (q_{i_0j_1} (  G^{j_1}_D q_{j_1j_2} ( \cdots ( G^{j_{n-1}}_D ( q_{j_{n-1}j}
 v(\cdot, j) ) \cdots ))) (x_0)\geq 0.
 $$
We conclude from Proposition \ref{P:3.7} that $q_{j_{n-1}j} (\cdot) v(\cdot , j)=0$ a.e. on $D$.
So there is some $y\in D$ so that $v(y, j)=0$. By \eqref{e:3.13} with $(y, j)$ in place of $(x, i)$
and Proposition \ref{P:3.7}, we have $v(x,j)=0$ for every $x\in D$.
\end{proof}

\begin{theorem}\label{T:strong}
{\rm (Strong Maximum Principle II)}
 Suppose that conditions {\rm(A1)-(A3)} hold, $D$ is a bounded
 connected open set  in $\R^d$ and $Q(x)$ is irreducible on $D$.
 If  $f(\cdot,i) \in C^2(D)$,
  $\sup_{\R^d\times \M} f \geq 0 $,
  and
$$
\mathcal G f(x,i)\ge 0\quad \hbox{for } (x, i)\in D\times \M,
$$
then $f(x,i)$ cannot attain its maximum inside $D\times\M$ unless
$$f(x,i)\equiv \sup\limits_{\R^d \times\M}f(y,j) \quad \hbox{on }  D\times\M.$$
\end{theorem}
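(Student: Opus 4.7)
The plan is to reduce the statement to two tools already established: the positivity alternative for $\G$-harmonic functions (Theorem \ref{T:3.8}) and the alternative for Green-type potentials (Lemma \ref{L:3.6}). Set $M := \sup_{\R^d \times \M} f$ and $g := M - f$, so that $g \geq 0$ on $\R^d \times \M$, $g(\cdot, i) \in C^2(D)$, and the assumption that the maximum is attained at some interior $(x_0, i_0) \in D \times \M$ becomes $g(x_0, i_0) = 0$. The first key observation is that $\G g \leq 0$ on $D \times \M$: the operator $\LL_i$ annihilates the constant $M$ (the derivative terms and the non-local integrand $M - M - 0$ all vanish), while $Q(x) M(\cdot)(i) = M\sum_{j\in \M} q_{ij}(x) \leq 0$ since $M \geq 0$ and $Q$ is sub-Markovian; combined with $\G f \geq 0$ this gives $\G g = \G M - \G f \leq 0$.

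The main step is Dynkin's formula on an exhausting family. Pick open $D_n$ with $\overline{D_n}\subset D_{n+1}$ and $\bigcup_n D_n = D$, and set $\tau_n := \tau_{D_n}$. Since $g(\cdot, i)$ is $C^2$ on a neighborhood of $\overline{D_n}$ and bounded on $\R^d$, Dynkin's formula for the switched Markov process gives
\begin{equation*}
g(x,i) = \E_{x,i}\bigl[g(X_{\tau_n}, \Lambda_{\tau_n})\bigr] + \E_{x,i}\int_0^{\tau_n}(-\G g)(X_s,\Lambda_s)\,ds,
\end{equation*}
with both summands on the right non-negative. Letting $n\to\infty$, using $\tau_n \uparrow \tau_D$ a.s., $\tau_D<\infty$ a.s.\ (by the standard exit-time bound, cf.\ \eqref{e:3.15}), monotone convergence for the integral term and bounded convergence for the boundary term, one obtains
\begin{equation*}
g(x,i) = \E_{x,i}\bigl[g(X_{\tau_D}, \Lambda_{\tau_D});\,\tau_D<\infty\bigr] + \E_{x,i}\int_0^{\tau_D}(-\G g)(X_s,\Lambda_s)\,ds.
\end{equation*}

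Now evaluate at $(x_0, i_0)$. Since $g(x_0, i_0) = 0$ and both summands are non-negative, each vanishes at $(x_0, i_0)$. Set $\phi := g\big|_{D^c\times\M} \geq 0$ and $w(x,i) := \E_{x,i}[\phi(X_{\tau_D}, \Lambda_{\tau_D});\,\tau_D<\infty]$; this is a $\G$-harmonic function on $U = D\times\M$ of exactly the form covered by Theorem \ref{T:3.8}, so $w(x_0, i_0) = 0$ forces $w \equiv 0$ on $D\times \M$. The second summand is the Green-type potential of the non-negative function $-\G g$ for the switched process, and Lemma \ref{L:3.6} (applied with $\phi = -\G g$) then forces it to vanish identically on $D\times\M$. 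Combining, $g \equiv 0$ on $D\times\M$, i.e., $f \equiv M = \sup_{\R^d\times\M} f$ on $D\times\M$, as claimed.

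The main obstacle I anticipate is the rigorous justification of Dynkin's formula with the non-local operator $\G$ and the passage $n\to\infty$: one must know that the jump integral in $\G g$ is well defined on $\overline{D_n}$ and that the boundary term $\E_{x,i}[g(X_{\tau_n}, \Lambda_{\tau_n})]$ converges appropriately. Condition (A3), together with boundedness of $g$ (implicit from finiteness of $M$) and the bound $|g(x+z,i)-g(x,i)-\nabla g(x,i)\cdot z\,\indi_{\{|z|<1\}}| = O(|z|^2\wedge 1)$, handles both points, since $\int(1\wedge |z|^2)\Pi(dz) < \infty$. Once this technical step is in place, the two-term decomposition cleanly reduces the strong maximum principle to the already-proven Theorem \ref{T:3.8} and Lemma \ref{L:3.6}.
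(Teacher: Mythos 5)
Your proof is correct and follows essentially the same strategy as the paper: a Dynkin/It\^{o} decomposition of the harmonic function into a boundary-harmonic part plus a Green-type potential, followed by the positivity dichotomies to force both parts to vanish once they vanish at the interior maximizer. The one genuine variation is worth noting: you work with $g = M - f$, observe $\mathcal{G}g \le 0$ directly (with the sub-Markovian killing absorbed into $-\mathcal{G}M = -M\sum_j q_{ij}(x) \ge 0$), and then invoke Theorem \ref{T:3.8} plus Lemma \ref{L:3.6} on the resulting decomposition $g = w + G_D(-\mathcal{G}g)$. The paper instead keeps $f$ and $h(x,j) := \E_{x,j}[f(X_{\tau_{D_1}}, \Lambda_{\tau_{D_1}})]$ and invokes Theorem \ref{T:3.9} (Strong Maximum Principle I), whose proof contains the Markovian/sub-Markovian case split and the $\bar Q$ majorant construction. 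Your route therefore sidesteps Theorem \ref{T:3.9} entirely and is arguably leaner, at the cost of having to verify $\mathcal{G}g \le 0$ separately. One small structural difference: you pass $D_n\uparrow D$ before applying the dichotomies on $D$, whereas the paper applies the dichotomies on each fixed $D_1\subset\subset D$ and takes the limit afterward; the latter avoids any quasi-left-continuity discussion about the convergence $g(X_{\tau_n},\Lambda_{\tau_n})\to g(X_{\tau_D},\Lambda_{\tau_D})$ and the $\tau_D<\infty$ requirement on the whole of $D$ (which the paper obtains on $D_1$ from Proposition \ref{L:4.5}), so it is the slightly safer ordering. Both you and the paper gloss over the same technicality that $f$ is only assumed $C^2$ on $D$ while the martingale-problem test functions live in $C^2_b(\R^d)$; your anticipation of this issue in the last paragraph is appropriate.
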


\begin{proof} Suppose $f$ achieves its maximum at some $(x_0, i_0)\in D\times \M$.
Let $D_1$ be any relatively compact connected open subset of $D$ that contains $x_0$
and that $Q(x)$ is irreducible on $D_1$.
Then by It\^{o}'s formula, we have for every $(x, j)\in D_1 \times \M$,
\begin{eqnarray}
f(x, j) &=& \E_{x,j} \left[ f( X_{\tau_{D_1}}, \Lambda_{\tau_{D_1}})\right] - \E_{x,j} \int_0^{\tau_{D_1}} \G f (X_s, \Lambda_s) ds \nonumber \\
 &\leq&   \E_{x,j} \left[ f( X_{\tau_{D_1}}, \Lambda_{\tau_{D_1}}\right] =: h(x, j). \label{e:3.14}
\end{eqnarray}
Let $M= \sup_{(y, j)\in \R^d \times \M} f(y, j)$,
which is non-negative.
 In view of \eqref{e:3.14},
$$
M=\sup_{(y, j)\in D_1^c  \times \M} f(y, j) = f(x_0, i_0).
$$
Clearly, $h\leq M$  and $h$ is $\G$-harmonic in $D_1\times \M$.
We have by \eqref{e:3.13}, $h(x_0, i_0)=M$ and
$$
\E_{x_0,i_0} \int_0^{\tau_{D_1}} \G f (X_s, \Lambda_s) ds=0.
$$
Theorem \ref{T:3.9} and Lemma \ref{L:3.6} tell us that $h\equiv M$ on $D_1\times \M$
and $\E_{x,i} \int_0^{\tau_{D_1}} \G f (X_s, \Lambda_s) ds=0$ for every $(x, i)\in D_1\times \M$.
Consequently, $f(x, i)\equiv M$ on $D_1\times \M$. Letting $D_1$ increase to $D$  establishes the theorem.
\end{proof}

\section{Harnack Inequality}\label{sec:harnack}

This section is devoted to the Harnack inequality for $\mathcal{G}$-harmonic functions.
For simplicity, we introduce some notation as follows. For any $U
=D\times \M\subset \R^d\times \M$, recall that
$$ \tau_D=\inf\{t\ge 0: X_t\notin D\}.
$$
We define
$$T^i_D:=\inf\{t\ge 0: X_t\in D,  \Lambda_t=
i\}, \quad i\in \M.$$

\begin{proposition}\label{P:4.2}
Assume conditions {\rm(A1)-(A3)} hold.
There exists a constant $C_2$
not depending on $x_0\in \R^d$ such that for any $r\in (0, 1)$ and any $i\in \M$,
\beq{E:13}
\P_{x_0, i}\(\tau_{B(x_0, r)}\le C_2r^2\)\le 1/2.
\eeq
\end{proposition}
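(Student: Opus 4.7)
The plan is to use the standard test-function / Itô's formula argument, tailored to the non-local generator $\G$. I would pick a cutoff-type function $f \in C_b^2(\R^d)$ such that $f(x_0)=0$, $0 \le f \le 1$, and $f(x) \equiv 1$ whenever $|x-x_0| \ge r$; for instance $f(x) = \phi(|x-x_0|/r)$ where $\phi : [0,\infty)\to [0,1]$ is a fixed smooth non-decreasing function with $\phi(0)=0$ and $\phi \equiv 1$ on $[1,\infty)$. Note that $f$ does not depend on the discrete component, so $\G f(x,i) = \LL_i f(x)$, and by scaling $\|\nabla f\|_\infty \le c/r$, $\|D^2 f\|_\infty \le c/r^2$.

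The main step is to show $|\LL_i f(x)| \le c_1 / r^2$ uniformly in $x\in \R^d$ and $i \in \M$. The local part is routine: by (A1)--(A2), $|\sum a_{kl}\partial^2_{kl} f| \le C/r^2$ and $|b\cdot \nabla f| \le C/r \le C/r^2$ for $r\in(0,1]$. The non-local part requires care and is the main obstacle. Split the integral at $|z|=1$; for $|z|<1$, a second-order Taylor expansion gives
\[
\bigl|f(x+z)-f(x)-\nabla f(x)\cdot z\bigr| \le \tfrac{1}{2}\|D^2 f\|_\infty |z|^2 \le \frac{c}{r^2}|z|^2,
\]
and for $|z|\ge 1$ we use $|f(x+z)-f(x)| \le 2$. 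Then assumption (A3), namely $\pi_i(x,dz) \le \Pi(dz)$ with $\int (1\wedge |z|^2)\Pi(dz) \le \kappa_1$, bounds the small-jump contribution by $c\kappa_1/r^2$ and the large-jump contribution by $2\kappa_1 \le 2\kappa_1/r^2$, giving $|\LL_i f(x)| \le c_1/r^2$ with $c_1$ depending only on $\kappa_0,\kappa_1,\|b\|_\infty$.

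With this bound in hand, I would apply the martingale problem for $(\G, C_b^2)$ together with optional sampling at $t \wedge \tau$, where $\tau := \tau_{B(x_0,r)}$, to obtain
\[
\E_{x_0,i}[f(X_{t\wedge \tau})] = f(x_0) + \E_{x_0,i}\int_0^{t\wedge \tau} \G f(X_s,\Lambda_s)\,ds \le \frac{c_1 t}{r^2}.
\]
Since the ball $B(x_0,r)$ is open and $f \ge 1$ off $B(x_0,r)$, on $\{\tau \le t\}$ we have $f(X_\tau) \ge 1$ (regardless of whether the exit is continuous or by a jump). Non-negativity of $f$ on $\{\tau > t\}$ then gives $\P_{x_0,i}(\tau \le t) \le \E_{x_0,i}[f(X_{t\wedge\tau})] \le c_1 t/r^2$.

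Finally, take $t = C_2 r^2$ with $C_2 := 1/(2c_1)$. Then $\P_{x_0,i}(\tau_{B(x_0,r)} \le C_2 r^2) \le 1/2$, and $C_2$ depends only on the constants in (A1)--(A3), not on $x_0$, $r$, or $i$. The only place any delicacy arises is the non-local estimate, where the uniform Lévy-type bound (A3) is exactly what is needed to absorb both the singular small-jump piece and the bounded-but-frequent large-jump piece into a single $c_1/r^2$ bound.
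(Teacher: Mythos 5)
Your proof is correct and follows essentially the same route as the paper's, which takes $v\approx|x-x_0|^2\wedge r^2$ (so that $\G v=O(1)$) where you take the rescaled cutoff $f=\phi(|x-x_0|/r)$ (so that $\G f=O(1/r^2)$); the two are identical up to a factor of $r^2$, and both control the nonlocal part of $\LL_i$ uniformly via (A3) by splitting the integral at $|z|=1$. One minor slip: since $Q$ is only sub-Markovian under (A1), a function $f$ constant in $i$ satisfies $\G f(x,i)=\LL_i f(x)+f(x)\sum_{j}q_{ij}(x)\le \LL_i f(x)$, with inequality in general rather than the equality you assert, but as you only need an upper bound on $\G f$ this does not affect the argument.
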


\begin{proof}	
Let $v(\cdot,i) \in C^2(\R^d)$ be a nonnegative function independent of $i$ and
\begin{eqnarray*}
v(x,i)=
\left\{
\begin{array}{ll}
|x-x_0|^2,\quad |x-x_0|\leq r/2,\\
r^2,\quad  |x-x_0|\geq r
\end{array}
\right.
\end{eqnarray*}
 such  that $v$ is bounded by $c_1r^2$, and its first and second order derivatives  are bounded by $c_1r$ and $c_1$, respectively.
  Since $\P_{x_0, i}$ solves the martingale problem, we have
$$
  \E_{x_0,i}v(X_{{t\wedge \tau_{B(x_0,r)}} },\Lambda_{{t\wedge \tau_{B(x_0,r)}} })
   =v(x_0,i)+\E_{x_0,i}\int_0^{t\wedge \tau_{B(x_0,r)}} \mathcal Gv(X_s,\Lambda_s)ds.
 $$
  Using the
 boundedness of the first and second derivatives of $v(\cdot, i)$ and $Q\cd$, we have
 $$\int_0^{t\wedge \tau_{B(x_0,r)}} \mathcal Gv(X_s,\Lambda_s)ds\le c_2t.$$
It follows that
$$\E_{x_0,i}v(X_{{t\wedge \tau_{B(x_0,r)}} },\Lambda_{{t\wedge \tau_{B(x_0,r)}} })-v(x_0,i)\leq c_2t.$$
On the other hand, since $v(X_{{\tau_{B(x_0,r)}} },\Lambda_{{ \tau_{B(x_0,r)}} })= r^2$, we obtain
\begin{equation*}
\E_{x_0,i}v(X_{{t\wedge \tau_{B(x_0,r)}} },\Lambda_{{t\wedge \tau_{B(x_0,r)}} }) \ge  r^2\P_{x_0,i}(\tau_{B(x_0,r)}\leq t).
\end{equation*}
Hence
 $$r^2\P_{x_0,i}(\tau_{B(x_0,r)}\leq t)\leq c_2t.$$
Taking $C_2=\frac{1}{2c_2}$ in the above formula and replacing $t$ by $C_2r^2$, we obtain \eqref{E:13}.
\end{proof}

\begin{proposition}\label{L:4.3}
Assume conditions {\rm(A1)-(A3)} hold.
 For any constant $\e\in (0,1)$, there exist positive constants $C_3$ and $C_4$ depending only on $\varepsilon$ such that for any $(x_0, i)\in \RR^d\times \M$ and any $r\in (0,1)$, we have
\begin{itemize}
	\item[\rm (a)]
	$\P_{x,i}\(\tau_{B(x_0,r)}> C_3 r^2\) \ge 1/2$ \ for $(x, i)\in B(x_0,(1-\varepsilon)r)\times \M$.
	\item[\rm (b)]
	$\E_{x,i}\tau_{B(x_0,r)}\geq C_4 r^2$ for
	$(x, i)\in B(x_0,(1-\varepsilon)r)\times\M$.
\end{itemize}
\end{proposition}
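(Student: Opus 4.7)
The plan is to deduce both estimates from Proposition~\ref{P:4.2} by a simple geometric shift of the test ball, followed by a one-line Markov-inequality argument for part (b).

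For part (a), fix $\varepsilon \in (0,1)$ and let $x \in B(x_0,(1-\varepsilon)r)$. The key observation is that $B(x,\varepsilon r) \subset B(x_0,r)$, so exiting the smaller ball occurs no later than exiting the larger one, i.e.\ $\tau_{B(x,\varepsilon r)} \le \tau_{B(x_0,r)}$ $\mathbb{P}_{x,i}$-a.s. Proposition~\ref{P:4.2}, applied to the ball $B(x,\varepsilon r)$ of radius $\varepsilon r \in (0,1)$ centered at the starting point $x$, gives
$$
\mathbb{P}_{x,i}\bigl(\tau_{B(x,\varepsilon r)} \le C_2 (\varepsilon r)^2\bigr) \le 1/2,
$$
equivalently $\mathbb{P}_{x,i}(\tau_{B(x,\varepsilon r)} > C_2 \varepsilon^2 r^2) \ge 1/2$. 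Combining this with the domination $\tau_{B(x,\varepsilon r)} \le \tau_{B(x_0,r)}$ yields
$$
\mathbb{P}_{x,i}\bigl(\tau_{B(x_0,r)} > C_2 \varepsilon^2 r^2\bigr) \ge \mathbb{P}_{x,i}\bigl(\tau_{B(x,\varepsilon r)} > C_2 \varepsilon^2 r^2\bigr) \ge 1/2,
$$
so the choice $C_3 := C_2 \varepsilon^2$ establishes (a).

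For part (b), apply the elementary inequality $\mathbb{E} Y \ge t \, \mathbb{P}(Y > t)$ for any nonnegative random variable $Y$ and any $t>0$. Taking $Y = \tau_{B(x_0,r)}$ and $t = C_3 r^2$ and using part (a),
$$
\mathbb{E}_{x,i} \tau_{B(x_0,r)} \ge C_3 r^2 \cdot \mathbb{P}_{x,i}\bigl(\tau_{B(x_0,r)} > C_3 r^2\bigr) \ge \tfrac{1}{2} C_3 r^2,
$$
so $C_4 := C_3/2 = C_2 \varepsilon^2 / 2$ works, uniformly in $x_0 \in \mathbb{R}^d$, $i \in \mathcal{M}$, and $r \in (0,1)$.

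There is no real obstacle here: the whole proposition is a reduction to Proposition~\ref{P:4.2}. The only subtle point is recognizing that Proposition~\ref{P:4.2} is stated for the starting point equal to the center of the ball, so one must recenter the auxiliary ball at the (arbitrary) starting point $x$ rather than at $x_0$, and then use the containment $B(x,\varepsilon r) \subset B(x_0, r)$ afforded by $x \in B(x_0, (1-\varepsilon)r)$. The factor $\varepsilon^2$ in the final constants is the price paid for this shrinking.
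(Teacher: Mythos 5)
Your proof is correct and follows essentially the same route as the paper's: apply Proposition~\ref{P:4.2} to the shifted ball $B(x,\varepsilon r)$ (so the starting point is the center), use the containment $B(x,\varepsilon r)\subset B(x_0,r)$ to compare exit times, and then deduce (b) by the Markov-type inequality; the paper simply names the constant $c_1$ rather than writing out $C_2\varepsilon^2$ explicitly.
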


\begin{proof}
By Proposition \ref{P:4.2},
there exists a constant $c_1$ depending only on $\varepsilon$ such that for any $(x,i)\in B(x_0,(1-\varepsilon)r)\times\M$, we have
$$
\P_{x,i}(\tau_{B(x_0,r)}\leq c_1r^2)\leq \P_{x,i}(\tau_{B(x,\varepsilon r)}\leq c_1r^2)\leq 1/2,
$$
which implies (a).
Hence
$$
\E_{x,i}\tau_{B(x_0,r)}\geq c_1r^2 \P_{x,i}(\tau_{B(x_0,r)}> c_1r^2)\geq  c_1r^2/2.
$$
Then (b) follows.
\end{proof}

\medskip

For a measure $\mu$ on $\R^d$ and $y\in \R^d$,
we use $\mu (dx-y)$ to denote the measure $\nu (dx)$
defined by $\nu (A):=\mu (A-y)$ for $A\in {\cal B} (\R^d)$,
where $A-y:=\{x-y: x\in A\}$.
We know how the switched Markov processes jumps at the switched times between different
plates.
The following describes how the switched Markov process $(X_s, \Lambda_s)$ jumps at
non-switching times.

\begin{proposition}\label{P:4.4}
Assume conditions {\rm(A1)-(A3)} hold.
Suppose  $A$ and $B$ are two bounded open subsets of $\R^d$ having a positive distance apart and $i_0\in \M$.
Then
\begin{equation}\label{E:15}
\sum_{s\leq t}\indi_{\{X_{s-}\in A, X_s\in B, \Lambda_s=i_0\}}-\int_0^t \indi_A\(X_s\)
\indi_{\{i_0\}}\(\Lambda_s\)\pi_{\Lambda_s}(X_s,B-X_s) ds
\end{equation}
is a $\P_{x, i}$-martingale for each $(x, i)\in \R^d\times \M$.
\end{proposition}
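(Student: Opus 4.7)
My plan is to derive the formula from the martingale problem via a smooth-cutoff construction and a dominated-convergence limit. Let $\delta := \operatorname{dist}(A, B) > 0$. Two structural facts will be exploited: (i) every $A \to B$ jump of $X$ has size at least $\delta$, so such jumps are genuine jumps of the integro-differential part of $\LL_i$; and (ii) by the construction of $(X_t, \Lambda_t)$ recalled in \remref{R:2.2}(a), $X$ does not jump at switching times of $\Lambda$ (the switching time $\tau_1$ is determined by an independent exponential clock and is a.s.\ disjoint from the jump times of $X^{i_0}$), so $\Lambda_s = \Lambda_{s-}$ at every jump of $X$.

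Since $B$ is open, choose $g_n \in C^2_b(\R^d)$ with $0 \le g_n \le g_{n+1} \le \indi_B$ and $g_n(x) \uparrow \indi_B(x)$ pointwise on $\R^d$; set $f_n(x,i) := g_n(x)\indi_{\{i=i_0\}}$, which satisfies $f_n(\cdot,i)\in C^2_b(\R^d)$ and vanishes on $A\times\M$. The martingale problem yields that
$$M^n_t := f_n(X_t,\Lambda_t) - f_n(X_0,\Lambda_0) - \int_0^t \G f_n(X_s,\Lambda_s)\,ds$$
is a $\P_{x,i}$-martingale. Writing out the canonical semimartingale decomposition of $f_n(X,\Lambda)$, namely a continuous martingale from the Brownian part of $X$, a finite-variation drift, a compensated sum of $X$-jumps, and a compensated sum of $\Lambda$-switching jumps, and matching term by term with $\LL_i f_n + Qf_n$, one identifies the compensated $X$-jump contribution
$$N^n_t := \int_0^t\!\!\int_{\R^d}\!\bigl[g_n(X_{s-}+z)-g_n(X_{s-})\bigr]\indi_{\{\Lambda_s=i_0\}}\,(N-\widehat N)(ds,dz)$$
as a local martingale, where $N(ds,dz)$ is the jump measure of $X$ and $\widehat N(ds,dz) = \pi_{\Lambda_s}(X_s,dz)\,ds$ is its compensator.

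Next I insert the predictable factor $\indi_A(X_{s-})$ into the integrand; the resulting stochastic integral against $N-\widehat N$ is still a local martingale. Since $g_n\equiv 0$ on $A$ we have $g_n(X_{s-})\indi_A(X_{s-}) = 0$, and $\indi_A(X_s) = \indi_A(X_{s-})$ for Lebesgue-a.e.\ $s$. Hence
$$\sum_{s\le t}\indi_A(X_{s-})\,g_n(X_s)\indi_{\{\Lambda_s=i_0\}} \;-\; \int_0^t \indi_A(X_s)\indi_{\{\Lambda_s=i_0\}}\!\int_{\R^d} g_n(X_s+z)\,\pi_{\Lambda_s}(X_s,dz)\,ds$$
is a local martingale. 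Condition (A3) gives $\Pi(\{|z|\ge\delta\})<\infty$, so the expected number of $X$-jumps of size $\ge\delta$ on $[0,t]$ is finite; this uniformly dominates both terms above. Monotone convergence as $g_n\uparrow \indi_B$ then yields the claimed identity, and the uniform integrability upgrades the local martingale to a genuine martingale.

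The main obstacle is the second step: extracting from the martingale problem the precise compensator of the $X$-jump part of $f_n(X,\Lambda)$. This relies on the fact that in our construction the processes $X$ and $\Lambda$ almost surely do not jump simultaneously, so the $X$-jump compensator (producing the $\pi_i$-integral) and the $\Lambda$-switching compensator (producing the $Q$-term) decouple cleanly, matching the two terms $\LL_i f_n$ and $Qf_n$ in $\G f_n$. Once this decoupling is made precise, the remaining approximation and monotone-convergence steps are routine.
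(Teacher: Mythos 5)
Your overall strategy---smooth approximation of $\indi_{B\times\{i_0\}}$, a buffer zone where the test function vanishes, insertion of the predictable factor $\indi_A(X_{s-})$, and a monotone-convergence limit using (A3)---is the same as the paper's, and the surrounding observations (no simultaneous jumps of $X$ and $\Lambda$, $\indi_A(X_s)=\indi_A(X_{s-})$ for a.e.\ $s$, integrability from $\Pi(\{|z|\ge\delta\})<\infty$) are correct. But there is a genuine gap in the middle step, and it is exactly the one you flag as ``the main obstacle.''

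You write that by ``matching term by term with $\LL_i f_n + Qf_n$ one identifies the compensated $X$-jump contribution'' $N^n_t = \int_0^t\int_{\R^d}[g_n(X_{s-}+z)-g_n(X_{s-})]\indi_{\{\Lambda_s=i_0\}}\,(N-\widehat N)(ds,dz)$ with $\widehat N(ds,dz)=\pi_{\Lambda_s}(X_s,dz)\,ds$. This step presupposes that the compensator of the jump measure $N$ of $X$ is $\pi_{\Lambda_s}(X_s,dz)\,ds$ --- but that statement \emph{is} the L\'evy system formula, which is precisely what the proposition is asserting. The martingale problem gives you only that $f_n(X_t,\Lambda_t)-\int_0^t\G f_n(X_s,\Lambda_s)\,ds$ is a martingale; it does not hand you a semimartingale decomposition of $f_n(X,\Lambda)$ into a Brownian part, a drift, a compensated $X$-jump part and a compensated $\Lambda$-jump part, nor does it directly identify the compensator of the jump measure. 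Deriving that decomposition requires an argument, and as written your argument assumes its conclusion. The paper's proof sidesteps this entirely: it works directly with the martingale $M^v(t)=v(X_t,\Lambda_t)-v(X_0,\Lambda_0)-\int_0^t\G v(X_s,\Lambda_s)\,ds$, inserts $\indi_A(X_{s-})$ into $dM^v$ (a legitimate stochastic integral against a martingale), and then uses an explicit alternating stopping-time sequence (entries into $A$, exits from the buffer set $A_1$) to write the resulting stochastic integral as a telescoping jump sum minus a time integral. The buffer set $A_1\supset\overline A$ on which $v\equiv 0$ makes $\G v(y,j)=\int v(y+z,j)\pi_j(y,dz)$ for $y\in A_1$, so the time-integral term automatically produces only the jump compensator, with no need to first isolate a L\'evy system. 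To repair your proof, replace the ``matching term by term'' identification with this direct manipulation of $\int_0^t\indi_A(X_{s-})\,dM^n_s$: the Riemann-sum / stopping-time argument of the paper is the missing piece, and it is non-trivial.
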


\begin{proof}
  Let $A_1$ be a bounded open subset of $\R^d$ so that $\overline A \subset A_1\subset \overline A_1
  \subset B^c$.
  Let $v(\cdot,j)\equiv 0$ for all $j\not= i_0$, and $v(\cdot, i_0) \in C^2_b(\R^d)$ so that
  $v(x, i_0)=0$ on $A_1$ and $ v(x, i_0)=1$ on $B$. Fix $(x, i)\in \R^d\times \M$.
Note that
	$$M^v(t):=v(X_t,\Lambda_t)-v(X_0,\Lambda_0)-\int_0^t\mathcal G v(X_s,\Lambda_s)ds$$
	is a $\P_{x, i}$-martingale,  so is $\int_0^t \indi_A(X_{s-})dM^v(s)$.
Define $\tau_0=0$,  $\tau_1=\inf\{t\geq 0: X_t\in A \}$, $\tau_2 = \inf\{t\geq \tau_1: X_t \in A_1^c\}$,
and for $k\geq 2$,
$$
\tau_{2k-1}=\inf\{t\geq \tau_{2 (k-1)}: X_t\in A \}, \qquad
\tau_{2k} = \inf\{t\geq \tau_{2k-1}: X_t \in A_1^c\}.
$$
Note that $v(X_t, \Lambda_t)=0$ for $t\in \cup_{k\geq 1} [\tau_{2k-1}, \tau_{2k})$ and
$\indi_A (X_{t-})=0$ for $t\in \cup_{k\geq 1} [\tau_{2(k-1)}, \tau_{2k-1})$. Thus
the Riemann sum approximation of stochastic integral yields that
	\bea
	\ad \int_0^t \indi_A\(X_{s-}\)dM^v(s) \\
\ad= \sum_{k=1}^\infty  \indi_A (X_{ \tau_{2k}\wedge t-} ) \Big(
v(X_{\tau_{2k}\wedge t}, \Lambda_{\tau_{2k}\wedge t} ) -v (X_{\tau_{2k}\wedge t-}, \Lambda_{\tau_{2k}\wedge t-} ) \Big)  \\
	\ad \quad -
	\int_0^t \indi_A\(X_{s-} \)\mathcal{G}v(X_s, \Lambda_s)ds\\
	\ad = \sum_{s\le t}\indi_A\(X_{s-}\)\big[v(X_s, \Lambda_s)-v(X_{s-}, \Lambda_{s-}) \big]\\
	\ad \quad -
	\int_0^t \indi_A\(X_s\)\mathcal{G}v(X_s, \Lambda_s)ds.
	\eea
 Since $v(y, j)=0$ on $ A_1\times\mathcal{M}$, we have
	$$\mathcal Gv(y,j)=\int_{\R^d} v(y+z,j)\pi_j(y,dz) =\int_{\R^d} v(z,j)\pi_j(y, d z-y)$$
 for every $(y, j)\in A_1\times \mathcal{M}$.
	Therefore,
\bea \ad\sum_{s\le t}\indi_A\(X_{s-} \)\big[v(X_s, \Lambda_s)-v(X_{s-}, \Lambda_{s-})\big]\\
	\aad \quad -\int_0^t\indi_A\(X_s\)\int_{\R^d} v(z,\Lambda_s)\pi_{\Lambda_s}(X_s, dz-X_s) ds
	\ \hbox{
		is a $\P_{x, i}$-martingale.}\eea
	Because $A$ and $B$ are a positive distance from each other, the sum on the left of the above formula is in fact a finite sum.
	With these facts we can pass to the limit to conclude that
	\bea \ad \sum_{s\leq t}\indi_A\(X_{s-}\)\big[\indi_{B\times \{i_0\}}\(X_s, \Lambda_s\)-\indi_{B\times \{i_0\}}\(X_{s-}, \Lambda_{s-}\)\big]\\
	\aad \quad -\int_0^t\indi_A\(X_s\)\int_{\R^d}  \indi_{B\times \{i_0\}}\(z, \Lambda_s\)\pi_{\Lambda_s}(X_s, dz-X_s) ds \ \hbox{ is a $\P_{x, i}$- martingale,}\eea
which implies
	$$\sum_{s\leq t}\indi_{\{X_{s-}\in A, X_s\in B, \Lambda_s=i_0\}}-\int_0^t \indi_A\(X_s\) \indi_{\{ \Lambda (s)=i_0\}} \pi_{\Lambda_s}(X_s, B-X_s) ds
	$$
	is a $\P_{x, i}$-martingale.
\end{proof}

\begin{proposition}\label{L:4.5}
There exist $\wdt r_0\in (0,1/2]$ and $C_5>0$ such that for any $x_0\in\R^d$ and any $r\in(0,\wdt r_0)$, we have
\begin{equation}\label{E:16}
\sup_{(x,i)\in B(x_0,r)\times\M}\E_{x,i}\tau_{B(x_0,r)}\leq C_5r^2.
\end{equation}
\end{proposition}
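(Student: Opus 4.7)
The plan is to prove the upper bound $\E_{x,i}\tau_{B(x_0,r)} \leq C_5 r^2$ via Dynkin's formula (the martingale problem) applied to a carefully chosen test function that handles the non-local jump operator. I would construct a nonnegative test function $v \in C^2_b(\R^d)$, taken independent of the discrete coordinate $i \in \M$, with $v(x) = r^2 - |x-x_0|^2$ on $B(x_0, r)$ and smoothly truncated to zero outside $B(x_0, 2r)$, satisfying the uniform estimates $\|v\|_\infty \leq c_* r^2$, $\|\nabla v\|_\infty \leq c_* r$, and $\|D^2 v\|_\infty \leq c_*$ for a universal constant $c_*$ independent of $r \in (0, 1)$.

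With this choice I would estimate $\G v(x, i)$ on $B(x_0, r) \times \M$ term by term. By (A2), the diffusion term gives $\sum_{k,l} a_{kl}(x,i)\,\partial_{kl} v = -2\operatorname{tr} a(x, i) \leq -2 d \kappa_0$, while the drift contributes at most $2r\|b\|_\infty$. Since $v$ is independent of $i$ and $\sum_j q_{ij}(x) \leq 0$, the switching term equals $v(x) \sum_j q_{ij}(x) \leq 0$. For the non-local integral I would split the integration at $|z| = r$: on $\{|z| < r\}$, both $x$ and $x+z$ lie in the strict quadratic region of $v$, so the second-order Taylor cancellation is exact and the integrand equals $-|z|^2 \leq 0$, giving a nonpositive contribution; on $\{|z| \geq r\}$, the bounds on $v$ and $\nabla v$ combined with (A3) (which yields $\pi_i(x, \{|z| \geq r\}) \leq \kappa_1/r^2$ and, by Cauchy\textendash Schwarz, $\int_{r \leq |z| < 1}|z|\,\pi_i(x, dz) \leq 2\kappa_1/r$) give a contribution of order $\kappa_1$ uniformly in $r$. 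Assembling these estimates and rescaling $v$ by a sufficiently large multiplicative constant so that the diffusion contribution dominates, one obtains $\G v(x, i) \leq -c_1$ on $B(x_0, r) \times \M$ for some $c_1 > 0$ independent of $x_0$ and of $r \in (0, \wdt r_0)$ for some $\wdt r_0 \in (0, 1/2]$.

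The conclusion then follows from the martingale property: applying it to $v(X_t, \Lambda_t) - v(X_0, \Lambda_0) - \int_0^t \G v(X_s, \Lambda_s)\,ds$, stopped at $t \wedge \tau_{B(x_0,r)}$, and using $v \geq 0$ and the just-derived pointwise estimate on $\G v$, one obtains
\[
c_1\,\E_{x,i}[t \wedge \tau_{B(x_0, r)}] \leq v(x, i) - \E_{x,i}\bigl[v(X_{t \wedge \tau_{B(x_0,r)}}, \Lambda_{t \wedge \tau_{B(x_0,r)}})\bigr] \leq \|v\|_\infty \leq c_* r^2.
\]
Letting $t \to \infty$ by monotone convergence yields $\E_{x,i}\tau_{B(x_0,r)} \leq (c_*/c_1) r^2$, which is the desired bound with $C_5 := c_*/c_1$.

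The main technical obstacle is controlling the non-local jump integral, which has no analogue in the purely diffusive setting. The crucial device is the exact second-order cancellation of $v$ inside its strict quadratic region, which forces the short-range jump contribution to be nonpositive; this is what permits (A3) alone to handle the residual medium- and long-range jump contributions uniformly in $r$, without requiring any additional moment assumption on $\pi_i$ beyond what is assumed.
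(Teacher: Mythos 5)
Your overall strategy (Dynkin's formula with a barrier function and the martingale problem, estimating $\G v$ termwise) is the same one the paper uses, and your test function is essentially the paper's turned upside-down: the paper takes $v(x,i)=u((x-x_0)/r)$ with $u$ a fixed bounded profile equal to $|y|^2$ near $0$ and increasing in $|y|$, so that the Hessian of $v$ is of order $r^{-2}$, the jump part is argued to be nonnegative from the convexity and monotonicity of $u$ (splitting at $|z|=1$, where $|z|>1$ forces $v(x+z)\geq 1 \geq v(x)$), and $\G v \geq c r^{-2}$ with $\|v\|_\infty\leq 10$; Dynkin then yields $\E\tau = O(r^2)$. Your $v$ is the complementary barrier $r^2-|x-x_0|^2$, whose Hessian is $O(1)$, and you aim at $\G v\leq -c_1<0$. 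The two viewpoints are equivalent via $v_{\text{yours}}=r^2-r^2\,v_{\text{paper}}$, and the point where you actually diverge from the paper is precisely where the gap appears.

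There are two problems. First, a geometric one: a nonnegative $C^2$ function that equals $r^2-|x-x_0|^2$ on all of $B(x_0,r)$ and vanishes outside $B(x_0,2r)$ does not exist, because $r^2-|x-x_0|^2$ already vanishes at $\partial B(x_0,r)$ with inward-pointing gradient of magnitude $2r$, so any nonnegative $C^2$ continuation must deviate from the quadratic inside $B(x_0,r)$. The related claim that for $|z|<r$ ``both $x$ and $x+z$ lie in the strict quadratic region'' fails for $x$ near $\partial B(x_0,r)$. You would need $v$ quadratic on $B(x_0,2r)$ (and then $v$ will be negative on part of that shell), which is fine since nonnegativity of $v$ is only used at points $x\in B(x_0,r)$ for the sign of the switching term. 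The second and more serious problem is the claim that ``rescaling $v$ by a sufficiently large multiplicative constant'' makes the diffusion term dominate: $\G(\lambda v)=\lambda\,\G v$, so scalar rescaling changes nothing about the sign of $\G v$. With your estimates, the diffusion term contributes $\leq -2d\kappa_0$, and the medium-range jump part $r\leq |z|<1$ contributes an upper bound of order $\kappa_1$ (from $\|v\|_\infty\leq c_*r^2$, $\|\nabla v\|_\infty\leq c_*r$, $\int_{r\leq|z|<1}\pi_i\leq\kappa_1/r^2$, $\int_{r\leq|z|<1}|z|\pi_i\leq\kappa_1/r$). Both are $O(1)$ in $r$ and there is no assumption in (A1)--(A3) that makes $\kappa_1$ small compared with $\kappa_0$, so the claimed inequality $\G v\leq -c_1<0$ is not established. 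What is missing is a second scale: if one introduces an intermediate radius $r_1$ and uses the Taylor bound $|v(x+z)-v(x)-\nabla v(x)\cdot z|\leq\tfrac12\|D^2 v\|_\infty|z|^2\leq\tfrac{c_*}{2}|z|^2$ on $r\leq|z|<r_1$, that part of the jump integral is bounded by $\tfrac{c_*}{2}\int_{|z|<r_1}|z|^2\Pi(dz)$, which tends to $0$ as $r_1\to 0$ by dominated convergence under (A3); the remaining range $r_1\leq|z|<1$ then contributes $O(r^2/r_1^2+r/r_1)=o(1)$ as $r\to 0$. Only with this two-scale argument (choose $r_1$ first, then $\wdt r_0$) does the diffusion contribution dominate and one obtains $\G v\leq -d\kappa_0<0$ on $B(x_0,r)$. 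The paper sidesteps this issue by asserting that the jump part of $\LL_i v$ is nonnegative by convexity of $u$, but in either formulation some version of this refinement is what makes the termwise sign argument close.
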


\begin{proof}
Let $u(x)\in C^2(\R^d)$ be a convex function in $x$ with values in $[0,10]$ and increase with respect to $|x|$ such that
\begin{eqnarray*}
u(x)=
|x|^2,\quad |x|\leq 2.
\end{eqnarray*}
Let $\wdt r_0\in (0,1/2)$ be sufficiently small.
For $x_0\in\R^d$ and $r\in (0,\wdt  r_0)$, let $v(x,i)=u(\frac{x-x_0}{r})$. Then for any $(x, i)\in B(x_0,r)\times \M$, since $v(\cdot, \cdot)$ is bounded in $[0, 10]$ and $Q\cd$ is bounded, there exists $c_1>0$ such that
\beq{E:16.01}Q(x)v(x, \cdot)(i)\ge -c_1.\eeq
Moreover,
\beq{E:16.02}\barray
\LL^{(c)}v(x,i)\ad :=\sum^d_{k,l=1}a_{kl}(x,i)\frac{\p^2v(x,i)}{\p x_k\p x_l}+\sum^d_{k=1}b_{k}(x,i)\frac{\p v(x,i)}{\p x_k}\\
\ad =\sum^d_{k=1}2a_{kk}(x,i)r^{-2}+\sum^d_{k=1}2b_{k}(x,i)(x_k-x_{0, k})r^{-2}\\
\ad \ge c_2r^{-2}-c_3r^{-1}\geq c_4r^{-2},
\earray\eeq
provided $\wdt r_0$ is small enough.
Define
$$\LL^{(j)} v(x, i)=\int_{\R^d} \big[f(x+z,i)-f(x,i)-\nabla f(x,i)\cdot z \indi_{\{|z|<1\}}\big]\pi_i(x, d z) .$$
 We break $\LL^{(j)}   v(x, i)$ into two parts, $|z|\leq 1$ and
$|z|>1$, respectively. For the first part, by the convexity of  $u(x)$, we deduce
\beq{E:16.03}
\int_{|z|\leq1}[v(x+z,i)-v(x,i)-\nabla v(x,i)\cdot z]\pi_i(x, d z) \geq0.
\eeq
For the second part with $|z|>1$, since $r< \frac 1 2$, we know $x+z\notin B(x_0,r)$ for any $x\in B(x_0,r)$. Then
we have $v(x+z,i)\geq1$ and $v(x,i)\leq1$, it follows that
\beq{E:16.04}\int_{|z|>1}[v(x+z,i)-v(x,i)]\pi_i(x, d z)\geq0.\eeq
Since $\P_{x, i}$ solves the martingale problem, together with \eqref{E:16.01}, \eqref{E:16.02}, \eqref{E:16.03}, and \eqref{E:16.04}, we deduce that
\beq{E:16.1}\barray \ad  \E_{x,i}v\big(X_{t\wedge\tau_{B(x_0,r)})},\Lambda_{t\wedge\tau_{B(x_0,r)})}\big)-v(x,i)\\
\aad =\E_{x,i}\int_0^{t\wedge\tau_{B(x_0,r)}}\mathcal Gv(X_s,\Lambda_s)ds\geq c_5r^{-2}\E_{x,i}(t\wedge\tau_{B(x_0,r)}).
\earray\eeq
By the definition of $v(x,i)$,
\beq{E:16.2}\E_{x,i}v(X_{t\wedge\tau_{B(x_0,r)})},\Lambda_{t\wedge\tau_{B(x_0,r)})})-v(x,i)\leq 10.\eeq
It follows from \eqref{E:16.1} and \eqref{E:16.2} that $$c_5r^{-2}\E_{x,i}(t\wedge\tau_{B(x_0,r)})\le 10.$$
The conclusion follows by letting $t\to \infty$.
\end{proof}

In the remaining of this section, we assume that   conditions (A1)-(A4)  hold.

\begin{definition}{\rm
		The generator $\G$ or the matrix function $Q\cd$
		is said to be \textit{strictly irreducible} on $D$ if
		for any $i, j\in \M$ and $i\ne j$, there exists $q_{ij}^0>0$ such that $\inf\limits_{x\in D}q_{ij}(x)\ge q_{ij}^0$.
		
	}
\end{definition}

\begin{proposition}\label{P:E-H}
Assume conditions {\rm(A1)-(A4)} hold.
Let $x_0\in \R^d$ and $r\in (0, \wt r_0)$. Suppose that the operator $\G$ is strictly irreducible on $B(x_0, r)$. Let
 $H: \mathbb{R}^d\times {\cal M} \mapsto \mathbb{R}$ be a bounded non-negative function supported in $B(x_0, 2r)^c\times {\cal M}$.
Then there exists a constant $C_6>0$ such that for any $x, y\in B(x_0, r/2)$ and any $i\in \M$,
$$
\E_{x,i}H\(X_{\tau_{B(x_0,r)}}, \Lambda_{\tau_{B(x_0,r)}}\)\leq C_6\alpha_{2r}\E_{y,i}H\(X_{\tau_{B(x_0,r)}}, \Lambda_{\tau_{B(x_0,r)}}\).
$$
\end{proposition}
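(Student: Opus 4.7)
The plan is to combine a L\'evy system representation from Proposition~\ref{P:4.4} with the pointwise comparison provided by condition~(A4), and then reduce to a uniform termwise occupation-time bound for the $\Lambda$-component in $B(x_0,r)$. Since $H$ vanishes on $B(x_0,2r)\times\M$ and the continuous part of $X$ exits $B(x_0,r)$ only through $\partial B(x_0,r)\subset B(x_0,2r)$, a contribution to $\E_{x,i}H$ must come from a single jump of $X$ from inside $B(x_0,r)$ directly into $B(x_0,2r)^c$ (with $\Lambda$ unchanged at that instant). Applying Proposition~\ref{P:4.4} with $A=B(x_0,r)$, letting $B$ range over Borel subsets of $B(x_0,2r)^c$, summing over $i_0\in\M$ and approximating $H$ by simple functions, I obtain
$$
\E_{x,i}H\(X_{\tau_{B(x_0,r)}},\Lambda_{\tau_{B(x_0,r)}}\)=\E_{x,i}\int_0^{\tau_{B(x_0,r)}}F(X_s,\Lambda_s)\,ds,
$$
with $F(w,j):=\int_{B(x_0,2r)^c}H(z,j)\wdt\pi_j(w,z-w)\,dz$, and the analogous identity starting from $y$.

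For $s<\tau_{B(x_0,r)}$ we have $X_s\in B(x_0,r)=B(x_0,(2r)/2)$, so condition~(A4) applied with radius parameter $2r$ yields $\wdt\pi_j(X_s,z-X_s)\le\alpha_{2r}\wdt\pi_j(w,z-w)$ for every $w\in B(x_0,r)$ and $z\in B(x_0,2r)^c$. Set $a_j:=\inf_{w\in B(x_0,r)}F(w,j)$; then $F(X_s,j)\le\alpha_{2r}\,a_j$ for $s<\tau_{B(x_0,r)}$. Plugging this upper bound into the representation at $x$ and the trivial lower bound $F(X_s,j)\ge a_j$ into that at $y$ yields
$$
\E_{x,i}H\;\le\;\alpha_{2r}\sum_{j\in\M}a_j\,G^{x,i}_j,\qquad \E_{y,i}H\;\ge\;\sum_{j\in\M}a_j\,G^{y,i}_j,
$$
where $G^{\bullet,i}_j:=\E_{\bullet,i}\int_0^{\tau_{B(x_0,r)}}\indi_{\{\Lambda_s=j\}}\,ds$. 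Only a single factor of $\alpha_{2r}$ enters, because the lower bound at $y$ uses the same infimum $a_j$. Thus the proposition reduces to proving a uniform termwise comparison $G^{x,i}_j\le C\,G^{y,i}_j$ with a constant $C$ depending only on the bounds in (A1)--(A3) and on the strict-irreducibility lower bounds $q^0_{ij}$, but not on $x,y\in B(x_0,r/2)$, $i,j\in\M$, or $r\in(0,\wdt r_0)$.

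For this final step, the upper bound $G^{x,i}_j\le\E_{x,i}\tau_{B(x_0,r)}\le C_5 r^2$ is immediate from Proposition~\ref{L:4.5}; more sharply, iterating the estimate $\P(\tau_1<\tau)\le\|q_{ii}\|_\infty\E\tau\le M C_5 r^2$ along any switching chain of length $n$ gives an upper bound of order $r^{2(n+1)}$ on $G^{x,i}_j$ whenever $j$ is reachable from $i$ in at least $n$ switches. For the matching lower bound at $y$, strict irreducibility furnishes a self-avoiding path $i=j_0,\dots,j_n=j$ with $\inf_{B(x_0,r)}q_{j_{k-1}j_k}\ge q^0>0$; I would iterate the strong Markov property at successive switching times, using the Feynman--Kac identity for the subprocess of $X^{j_{k-1}}$ killed at rate $-q_{j_{k-1}j_{k-1}}$ together with Proposition~\ref{L:4.3}(b) to control each pre-switch segment from below by $c\,r^2$, and using that the branching ratio $q_{j_{k-1}j_k}/\|q_{j_{k-1}j_{k-1}}\|_\infty\ge q^0/\|q\|_\infty$ is bounded below at each switch, to obtain a lower bound of the same order $r^{2(n+1)}$. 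The main obstacle will be this last step: after each switch the position of $X$ may lie anywhere in $B(x_0,r)$, not necessarily in $B(x_0,(1-\varepsilon)r)$ where Proposition~\ref{L:4.3}(b) applies directly, so one must work on nested sub-balls of comparable radius and combine uniform pre-switch exit estimates with the strong Markov property to avoid losing a power of $r$ at each iteration.
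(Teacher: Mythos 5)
Your plan rests on the same two pillars as the paper's proof -- the L\'evy system identity from Proposition~\ref{P:4.4} rewriting $\E_{x,i}H(X_{\tau_B},\Lambda_{\tau_B})$ as $\E_{x,i}\int_0^{\tau_B}F(X_s,\Lambda_s)\,ds$ with $F(w,j)=\int_{B(x_0,2r)^c}H(z,j)\wdt\pi_j(w,z-w)\,dz$, and the pointwise comparison of $\wdt\pi_j$ from (A4) -- but you organize the second half differently. The paper decomposes $u(x,i)$ at the first switching time via \eqref{E:18.1}, bounds $h_i$ above and below by $M_i r^2$ (up to a factor $\alpha_{2r}$) via \eqref{E:18.3}--\eqref{E:18.3.2}, and bounds the Green-operator remainder above and below by $M r^4$ via \eqref{E:18.2} and \eqref{E:18.4}, so that both $u(x,i)$ and $u(y,i)$ are sandwiched between multiples of $M_i r^2 + M r^4$. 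You instead push everything through to a \emph{termwise} occupation-time comparison $G^{x,i}_j\le C\,G^{y,i}_j$, where $G^{x,i}_j=\E_{x,i}\int_0^{\tau_B}\indi_{\{\Lambda_s=j\}}\,ds$, which is a slightly cleaner reduction: once you observe $G^{x,i}_i\asymp r^2$ and $G^{x,i}_j\asymp r^4$ for $j\ne i$ uniformly over $B(x_0,r/2)$, the claim follows with a single $\alpha_{2r}$. Your final step and the paper's \eqref{E:18.4} both hit the same issue -- the post-switch position is only in $B(x_0,r)$, not in a ball where Proposition~\ref{L:4.3} applies directly -- and both resolve it the same way, by using a nested sub-ball (the paper uses $\tau_{B(x_0,3r/4)}\wedge\tau_1$). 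One small simplification you could exploit: under strict irreducibility every off-diagonal $q_{ij}$ is bounded below on $D$, so the self-avoiding chain you invoke always has length $n=1$; your general chain argument is correct but unnecessary here. Overall your proposal is correct, differs from the paper in the bookkeeping rather than the substance, and what it buys is a cleaner separation between the (A4)-comparison step (done once, yielding the $\alpha_{2r}$ factor) and the purely occupation-time step (done with no $\alpha_{2r}$), at the cost of having to argue the termwise $G^{x,i}_j\le C\,G^{y,i}_j$ estimate explicitly for $j=i$ and $j\ne i$ separately.
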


\begin{proof}
	Denote $B=B(x_0, r)$. Define $$u(x, i)=\E_{x,i}H\(X_{\tau_{B}}, \Lambda_{\tau_{B}}\) \quad \text{for}\quad (x, i)\in B\times \M.$$
	Since $H=0$ on $B(x_0,2r)\times {\cal M}$, we have by using the L\'evy system formula of $Y=(X, \Lambda)$ given by Proposition \ref{P:4.4}
	that
\beq{E:17.5}
\barray
	u(x, i) \ad= \E_{x,i} \left[ H \(X_{\tau_{B}}, \Lambda_{\tau_{B}}\);
	X_{\tau_{B}-} \in \overline{B}; X_{\tau_{B}}\in B(x_0, 2r)^c \right]
	\\
	\ad =\E_{x,i}\left[ \int_0^{\tau_{B}} \int_{B(x_0, 2r)^c} H\(z, \Lambda_s\) \wdt\pi_{\Lambda_s} \(X_s, z-X_s\)dz ds \right].
	\earray\eeq
	We deduce that
	\bea
		u(x, i) \ad
		  \le  \E_{x,i}\left[ \int_0^{\tau_{B}} \int_{B(x_0, 2r)^c}\sum_{j=1}^{m}  H(z, j)\wdt \pi_{j} \(X_s, z-X_s\)dz ds \right]\\
		  \ad    \le  \Big( \sum_{j=1}^{m}  \int_{B(x_0, 2r)^c}
		H(z, j)\sup\limits_{w\in B}\wdt\pi_j(w, z-w)dz \Big)
		\E_{x,i}  \tau_{B }  \\
		\ad \le  c_1 r^2 \sum_{j=1}^{m}  \int_{B(x_0, 2r)^c}
		H(z, j)\sup\limits_{w\in B}\wdt\pi_j(w, z-w)dz\\
		\ad = c_1r^2 M,
\eea
where the last inequality is
a consequence of
Proposition \ref{L:4.5}, and
$$M=\sum\limits_{j=1}^m M_j, \quad  M_j=\int_{B(x_0, 2r)^c}
H(z, j)\sup\limits_{w\in B}\wdt\pi_j(w, z-w)dz.$$
Thus,
\beq{E:17.7} u(x, i)\le c_1r^2M \quad \text{for }  (x, i)\in B\times \M.
\eeq
	Let $\tau_1=\inf\{t>  0: \Lambda_t\ne \Lambda_0\}$ and denote the Green operator of $\LL_i +q_{ii}$ in $B$ by $G^i_B$.
Define $$h_i(x)=\E_{x, i}[H \(X_{\tau_{B}}, \Lambda_{\tau_{B}}\);  \tau_B<\tau_1], \quad (x, i)\in B\times \M .$$
By the strong Markov property of $\(X_t, \Lambda_t\)$, we have
\beq{E:18.1}u(x, i)= h_i(x) + \sum\limits_{k\ne i}G^i_B\Big(q_{ik}(\cdot)u(\cdot, k)\Big)(x).
\eeq	
By \eqref{E:17.7} and the fact that $||q_{ik}||_{\infty}=\sup\limits_{x\in \R^d}|q_{ik}(x)|<\infty$, we arrive at
\begin{eqnarray*}
\sum\limits_{k\ne i}G^i_B\Big(q_{ik}(\cdot)u(\cdot, k)\Big)(x)&\le & \sum\limits_{k\ne i} c_1r^2 M ||q_{ik}||_{\infty} \E_{x, i}\tau_B\\
&\le &\sum\limits_{k\ne i} c_1r^2 M ||q_{ik}||_{\infty} c_1r^2=c_2Mr^4.
\end{eqnarray*}
	Combining above estimates, we obtain
	\beq{E:18.2}
	u(x, i)\le h_i(x) + c_2Mr^4 \quad \text{for} \quad (x, i)\in B\times \M.
	\eeq
	Next, we drive an lower bound for $\E_{x, i}(\tau_B\wedge \tau_1)$ for $(x, i)\in B(x_0, r/2)\times \M$.
	By Proposition \ref{L:4.3}, there exists $c_3>0$ such that
	$$\P_{x, i}\(\tau_B>c_3r^2\)\ge \dfrac{1}{2}.$$
	It follows that
	\bea
	\P_{x, i} (\tau_B\wedge\tau_1\ge c_3r^2 )\ad \ge
	\P_{x, i} \(\tau_B> c_3r^2 \text{ and } \tau_1\ge c_3r^2\)\\
	\ad \ge \exp\(-||q_{kk}||_\infty c_3r^2\)\P_{x, i}\(\tau_B>c_3r^2\)\\
	\ad
	\ge  \dfrac{1}{2}\exp\(-||q_{kk}||_\infty c_3 \wdt r_0^2\)=:c_4.
	\eea
	Then we obtain
	\beq{E:18.2.2}\E_{x, i}\(\tau_B\wedge \tau_1\)\ge c_3c_4r^2 \quad  \text{for } (x, i)\in B(x_0, r/2)\times \M.\eeq
	 By assumption (A4), for any $z\in B(x_0, 2r)^c$,
$$\sup\limits_{w\in B}\wdt\pi_j\(w, z-w\)\le \alpha_{2r}\inf\limits_{w\in B}\wdt\pi_j\(w, z-w\).$$
By this inequality and
\eqref{E:18.2.2},
we have by using L\'evy system formula of $Y=(X, \Lambda)$,
	\beq{E:18.3}
	\barray
 h_i(x)\ad= \E_{x,i}\left[ \int_0^{\tau_{B}\wedge \tau_1} \int_{B(x_0, 2r)^c} H\(z, \Lambda_s\) \wdt\pi_{\Lambda_s} \(X_s, z-X_s\)dz ds \right]\\
	\ad \ge \E_{x,i}\left[ \int_0^{\tau_{B}\wedge\tau_1} \int_{B(x_0, 2r)^c} H(z, i) \inf\limits_{w\in B}\wdt\pi_{i} \(w, z-w\)dz ds \right] \\
	\ad \ge \alpha_{2r}^{-1} M_i \E_{x, i}(\tau_B\wedge \tau_1)\\
	\ad \ge c_5\alpha_{2r}^{-1} M_i r^2 \quad \text{for}\quad x\in B(x_0, r/2).
	\earray
	\eeq
	On the other hand,
	\beq{E:18.3.2}
	\barray
	 h_i(x)\ad =\E_{x,i}\left[ \int_0^{\tau_{B}\wedge \tau_1} \int_{B(x_0, 2r)^c} H\(z, \Lambda_s\) \wdt\pi_{\Lambda_s} \(X_s, z-X_s\)dz ds \right]\\
	\ad \le \E_{x,i}\left[ \int_0^{\tau_{B}\wedge\tau_1} \int_{B(x_0, 2r)^c} H(z, i) \sup\limits_{w\in B}\wdt\pi_{i} \(w, z-w\)dz ds \right]\\
	\ad \le M_i \E_{x, i}\tau_B\\
	\ad \le c_6 M_i r^2 \quad \text{for}\quad x\in B(x_0, r/2).
	\earray
	\eeq
Note that for $i\ne k$, $\inf\limits_{x\in \R^d}q_{ik}(x)\ge q_{ik}^0>0$. By  \eqref{E:18.1} and \eqref{E:18.3}, we have
\beq{E:18.4}
\barray
\sum\limits_{k\ne i}G^i_B\Big(q_{ik}(\cdot)u(\cdot, k)\Big)(x) \ad \ge \sum\limits_{k\ne i} c_5\alpha_{2r}^{-1}M_kr^2 G^i_B\Big(q_{ik}(\cdot)\indi_{B(x_0, 3r/4)}(\cdot)\Big)(x)\\
\ad \ge \sum\limits_{k\ne i} c_5\alpha_{2r}^{-1} M_kr^2 q_{ik}^0  \E_{x, i}\( \tau_{B(x_0, 3r/4)}\wedge \tau_1\)\\
\ad \ge  \sum\limits_{k\ne i} c_5\alpha_{2r}^{-1} M_kr^2 q_{ik}^0 c_7 r^2\\
\ad = c_8 \alpha_{2r}^{-1}r^4 \sum\limits_{k\ne i} M_k \quad \text{for } x\in B(x_0, r/2).
\earray
\eeq
In the above, we used the fact that $\E_{x, i}\( \tau_{B(x_0, 3r/4)}\wedge \tau_1\)\ge c_7r^2$. This can be derived in the same way as that of \eqref{E:18.2.2}.
 By \eqref{E:18.1}, \eqref{E:18.2}, \eqref{E:18.3},  \eqref{E:18.3.2}, and  \eqref{E:18.4}, for any $x, y\in B(x_0, r/2)$ and $i\in \M$, we have
 \bea
 u(y, i) \ad = h_i(y) + \sum\limits_{k\ne i}G^i_B\Big(q_{ik}(\cdot)u(\cdot, k)\Big)(y)
\\
\ad \ge c_5\alpha_{2r}^{-1} M_i r^2 + c_8 \alpha_{2r}^{-1} r^4 \sum\limits_{k\ne i} M_k\\
\ad \ge c_9 \alpha_{2r}^{-1} \Big( M_i r^2 +  r^4 \sum\limits_{k\ne i} M_k\Big)\\
\ad \ge c_{10} \alpha_{2r}^{-1} \Big( M_i r^2 +  r^4 M\Big) \ge c_{11} \alpha_{2r}^{-1} u(x, i).
 \eea
The proof of the proposition is complete.
\end{proof}

\begin{theorem}\label{T:harnack}
Assume conditions {\rm(A1)-(A4)} hold.
Let $D\subset \R^d$ be a bounded connected open set
and $\K$ be a compact set in $D\subset \R^d$. Suppose that $\G$ is strictly irreducible on $D$. Then there exists $C_7>0$ which depends only on $D, \K$ and  operator $\G$ such that if $f(\cdot,\cdot)$ is a nonnegative, bounded function in $\R^d\times\M$
 that is $\mathcal G$-harmonic in $D\times\M$, we have
\begin{equation}\label{E:20}
f(x,i)\leq C_7f(y,j)  \quad \hbox{for } x, y\in \K \hbox{ and } i, j\in {\cal M}.
\end{equation}
\end{theorem}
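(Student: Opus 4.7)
The plan is to combine a standard chaining argument with a Bass--Levin style contradiction argument, using Proposition~\ref{P:E-H} for the far-jump contribution and the strict irreducibility of $Q$ to bridge different switching states.

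First, I would fix $r>0$ small enough (satisfying the constraints of Propositions~\ref{P:4.6}, \ref{L:4.5}, and \ref{P:E-H}) that $B(x,2r)\subset D$ for every $x\in\K$. Since $\K$ is compact in the connected open set $D$, it can be covered by finitely many balls $\{B(z_k,r/4)\}_{k=1}^N$, and any two points of $\K$ can be joined by a chain of at most $N$ such overlapping balls whose centers lie in a slight enlargement of $\K$. It therefore suffices to establish a local Harnack inequality $f(x,i)\le C_* f(y,j)$ for all $x,y\in B(x_0,r/2)$ and all $i,j\in\M$, with $C_*$ independent of the center $x_0$; iterating along the chain then yields $C_7\le C_*^N$.

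For the local inequality, set $B=B(x_0,r)$ and use $\G$-harmonicity to decompose
\[
f(x,i)=\E_{x,i}\bigl[f(X_{\tau_B},\Lambda_{\tau_B});X_{\tau_B}\notin B(x_0,2r)\bigr]+\E_{x,i}\bigl[f(X_{\tau_B},\Lambda_{\tau_B});X_{\tau_B}\in B(x_0,2r)\bigr].
\]
For the far-jump part, Proposition~\ref{P:E-H} applied to $H(z,k)=f(z,k)\indi_{B(x_0,2r)^c}(z)$ gives
\[
\E_{x,i}\bigl[f(X_{\tau_B},\Lambda_{\tau_B});X_{\tau_B}\notin B(x_0,2r)\bigr]\le C_6\alpha_{2r}f(y,i)
\]
for any $x,y\in B(x_0,r/2)$ sharing the same discrete component $i$. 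To then replace $f(y,i)$ by $f(y,j)$ for a different $j$, I would invoke strict irreducibility as in the proof of Theorem~\ref{T:3.8}: starting from $(y,j)$, the switching chain $\Lambda$ reaches state $i$ within a time of order $r^2$ with positive probability, using $\inf_x q_{ij}(x)\ge q_{ij}^0>0$, Proposition~\ref{L:4.3}, and the strong Markov property.

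The remaining near part $\E_{x,i}[f(X_{\tau_B},\Lambda_{\tau_B});X_{\tau_B}\in B(x_0,2r)]$ is the main obstacle, since $X_{\tau_B}$ may land in the annulus $B(x_0,2r)\setminus B$ where $f$ is only known to be bounded. I would argue by contradiction: assume the local Harnack inequality fails along a sequence $f_n$ with ratios $C_n\to\infty$, and normalize so that $\inf_{B(x_0,r/2)\times\M} f_n=1$. Proposition~\ref{P:4.6}(ii) then forces the super-level set $\{(x,i):f_n(x,i)>K\}$ to have vanishing Lebesgue density in $B(x_0,r/2)$ as $K\to\infty$, for otherwise the hitting probability into that set from any point of $B(x_0,r/2)$ would be bounded below uniformly, giving $\inf f_n\ge cK$. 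Iterating on a geometrically shrinking sequence of sub-balls, combined with the comparability of the jump intensities (A4), the L\'evy system formula (Proposition~\ref{P:4.4}), and the exit-time bounds (Propositions~\ref{P:4.2}, \ref{L:4.3}, \ref{L:4.5}), then contradicts the far-part estimate above. The delicate point, and the part I expect to be hardest to execute carefully, is that the non-local jump term (which requires the comparability A4 at every scale) and the discrete switching coordinate (which requires chaining via strict irreducibility) must be tracked simultaneously, so the standard Bass--Levin iteration has to be adapted to both coordinates at once.
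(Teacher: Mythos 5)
Your outline follows the same Bass--Levin/Foondun iteration strategy that the paper uses: a chaining reduction to a local Harnack inequality on a fixed ball, Proposition~\ref{P:E-H} for the far-jump contribution, Propositions~\ref{P:4.6.2} and \ref{P:4.6} for hitting probabilities, and Proposition~\ref{L:4.5} for exit-time bounds. You correctly identify all the right tools and the right shape of the argument.

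There is, however, a genuine gap in the iteration step, precisely at the ``delicate point'' you flag but do not resolve. The density estimate (your claim that super-level sets of $f_n$ have vanishing Lebesgue density) needs, for each discrete state $i$ that can appear in the iteration, a \emph{reference point} $\bar x_i$ in the enlarged ball where $f(\bar x_i, i)$ is small (say $<1$). The paper uses exactly such points: if the set $A_n=\{y: f(y,i_n)\ge \xi N_n r_n^\beta/\kappa_2\}$ had density $>1/4$ in $B(x_n,r_n/2)$, then Proposition~\ref{P:4.6.2} would force $f(\bar x_{i_n},i_n)\ge 2$, a contradiction. But the normalization $\inf_{B(x_0,r/2)\times\M}f_n=1$ by itself does \emph{not} produce such a point for every state: it only gives one state $i^*$ and one point where $f_n$ is close to $1$, while $f_n(\cdot,j)$ could be uniformly large for some other $j$. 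The paper therefore splits into two cases --- case (i), where $\inf_{B(x_0,2R)}f(\cdot,i)<1$ for every $i$, so the needed reference points exist; and case (ii), where $f(\cdot,i)\ge 1$ on all of $B(x_0,2R)$ for some $i$, which is handled separately by the Green operator estimate $g(x,j)\ge G^j_{B(x_0,2R)}\bigl(q_{ji_0}\,g(\cdot,i_0)\bigr)(x)\ge c_7>0$ using strict irreducibility. Your proposal omits this case distinction.

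Relatedly, the ``bridge between discrete states'' you sketch --- run from $(y,j)$, switch to $i$ within time $O(r^2)$ using $q_{ji}\ge q_{ji}^0>0$ and Proposition~\ref{L:4.3} --- is the right instinct but as stated is circular. That argument produces a bound of the form $f(y,j)\ge c\,\inf_{B(y,r/4)}f(\cdot,i)$, i.e.\ a lower bound by the infimum of $f(\cdot,i)$ over a nearby ball, not by the specific value $f(y,i)$; upgrading the infimum to $f(y,i)$ would itself require a single-state Harnack inequality, which is what you are trying to prove. The paper sidesteps this: because in its iteration the next point $(x_{k+1},i_{k+1})$ is chosen as a near-maximizer of $f$ over $B(x_k,2r_k)\times\M$ --- not just over $B(x_k,2r_k)\times\{i_k\}$ --- the discrete coordinate is carried along automatically and no pointwise comparison between $f(y,i)$ and $f(y,j)$ is needed until the very end, where it follows from the uniform bounds $\inf\ge 1/2$ (or $\ge c_7$) and $\sup<N_1$.

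So: correct overall architecture, but to make it rigorous you would need to (a) introduce the case distinction on $\inf_{B(x_0,2R)}f(\cdot,i)$ so that reference points $\bar x_i$ exist when needed, and (b) let the discrete component change freely at each iteration step rather than trying to fix it and bridge states pointwise.
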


\begin{proof}
We first show that for each fixed ball  $B(x_0,4R)\subset D$ with $R<\frac 1 8 \wedge \wt r_0$ (where $\wt r_0$ is given in Proposition \ref{L:4.5}), there exists a constant $C>0$ that depends only on $R$ and operator $\G$ such that for any nonnegative, bounded and $\mathcal G$-harmonic function $f(\cdot,\cdot)$ in $B(x_0,4R)\times\M$, we have
\begin{equation}\label{E:21}
f(x,i)\leq Cf(y,j) \quad \hbox{for } x, y\in B(x_0,R)
\hbox{ and } i, j\in {\cal M}.
\end{equation}
By looking at $f+\varepsilon$ and sending $\varepsilon$ to 0, we may suppose that $f$ is bounded below by a positive constant. By looking at $af(x,i_0)$ for a suitable constant $a$ if needed, we may
assume that $\inf\limits_{(x, i)\in B(x_0,R)\times \M}f(x,i)=1/2$.

\bigskip

\noindent (a) Let us recall several results. Let $r< \wt {r}_0<1/2$.
By Proposition \ref{P:4.6.2}, there exits a constant $c_1>0$ such that for any $x\in B(x_0, 3R/2)$ and any $i\in \M$,
\beq{E:21.2}
\P_{\overline{x}_i, i}\(T^{i}_{B(x, r/2)}<\tau_{B(x_0, 4R)}\)\ge c_1r^6.
\eeq
By Proposition \ref{P:4.6}, there exists a nondecreasing function $\Phi: (0, \infty)\mapsto (0, \infty)$ such that if  $A$ is a Borel subset of $B(x,r)$ and $|A|/r^d\ge \rho$ for a given $\rho$, then for any $(y, i)\in B(x,r)\times \M$ and $r\in (0, \wt r_0)$,
\begin{equation}\label{E:21.3}
\P_{y,i}(T^{i}_A<\tau_{B(x, 2r)})\geq \dfrac{1}{2}\Phi\(|A|/r^d\).
\end{equation}
By  Proposition \ref{P:E-H} and $H$
being a nonnegative function supported on $B(x, 2r)^c$,  for any $y,z\in B(x,r/2)$
and $i\in {\cal M}$,
\begin{equation}\label{E:22}
\E_{y,i}H(X_{\tau_{B(x,r)}}, \Lambda_{\tau_{B(x,r)}})\leq c_2\alpha_{2r}\E_{z,i}H(X_{\tau_{B(x,r)}}, \Lambda_{\tau_{B(x,r)}}).
\end{equation}
To proceed, we first consider the case that
\beq{E:21.1}
\inf\limits_{x\in B(x_0, 2R)}f(x, i)<1 \quad \text{for each} \quad i\in \M.
\eeq
Thus, there exists
$\{\overline{x}_i\}_{i\in \M}$ such that
 \beq{E:21.1.2}\overline{x}_i\in B(x_0, 2R) \quad \text{and}\quad f(\overline{x}_i, i)<1.\eeq

\noindent
(b)	
For $n\ge 1$, let
$$r_n=c_3R/n^2,$$
where $c_3$ is a positive number such that $\sum_{n=1}^\infty r_n< R/4$ and $r_n\in (0, \wt r_0)$ for all $n$, that is,
\beq{E:24}c_3< \dfrac{1}{4\sum_{n=1}^{\infty} 1/n^{2}} \wedge \dfrac{\wt r_0}{R}.\eeq
In particular, it implies $r_n< R/4$.  Let $\xi$, $c_4$, $c_5$ be positive constants to be chosen later. Once these constants have been chosen, we can take $N_1$ large enough so that
\beq{E:25}\xi N_1\exp(c_4n)c_5r_n^{6+\beta}\ge 2\kappa_2 \quad \text{for all} \quad n=1, 2, \dots
\eeq
The constants $\kappa_2$ and $\beta$ are taken from assumption (A4).
Such a choice is possible since $c_4>0$ and $r_n=c_3R/n^2$.
Suppose that there exists $(x_1, i_1)\in B(x_0,R)\times \M$ with $f(x_1,i_1)=N_1$ for $N_1$ chosen above.
We will show that in this case there exists a sequence $\{(x_k, i_k): k\ge 1\}$ with
\begin{equation}\label{E:26}
\barray
\aad (x_{k+1}, i_{k+1})\in B(x_{k},2r_{k})\times {\cal M}\subset B(x_0,3R/2) \times {\cal M},\\
\aad N_{k+1}:=f(x_{k+1}, i_{k+1})\ge N_1\exp\(c_4(k+1)\).\earray\end{equation}
\bigskip

\noindent
(c) Suppose that we already have $\{(x_k, i_k); 1\leq k \leq n\}$ so  that \eqref{E:26} is satisfied for $k=1, \dots, n-1.$
Define
 $$A_n=\Big\{y\in B(x_n,r_n/2): f(y,i_n)\geq
\dfrac{\xi N_n r_n^\beta}{\kappa_2}
 \Big\}.$$	
We claim that
\begin{equation}\label{E:27}
\frac{|A_n|}{|B(x_n,r_n/2)|} \leq \frac 1 4.
\end{equation}
Suppose on the contrary,
$\frac{|A_n|}{|B(x_n,r_n/2)|}> 1/ 4$. Let $F$ be a compact subset of $A_n$ such that $\frac{|F|}{|B(x_n,r_n/2)|}> 1/4$. Then $F\subset B(x_0,2R)$.
By \eqref{E:21.2},
$$\P_{\overline{x}_{i_n}, i_n}\(T^{i_n}_{B(x_n, r_n/2)}<\tau_{B(x_0, 4R)}\)\ge c_1r_n^6,$$
where $c_1$ is independent of  $x_n$ and $r_n$. By the strong Markov property of $\(X_t, \Lambda_t\)$, we have
\bea
\P_{\overline{x}_{i_n}, i_n} \( T_F^{i_n}<\tau_{B(x_0, 4R)} \)\ad \ge \E_{\overline{x}_{i_n}, i_n} \Big[ \P_{X_{T^{i_n}_{B(x_n, r_n/2)}}, i_n}\(T_F^{i_n}<\tau_{B(x_n, r_n)}\); T^{i_n}_{B(x_n, r_n/2)}<\tau_{B(x_0, 4R)}         \Big]\\
\ad \ge \dfrac{1}{2}\Phi\(\dfrac{2^d|F|}{r_n^d}\)
\P_{\overline{x}_{i_n}, i_n}\(T^{i_n}_{B(x_n, r_n/2)}<\tau_{B(x_0, 4R)}\)\\
\ad \ge \dfrac{1}{2}\Phi\(\dfrac{\al(d)}{4}\)c_1r_n^6,
\eea
where $\al(d)$
is the volume of the unit ball in $\R^d$.

We take $c_5=\dfrac{1}{2}\Phi\(\al(d)/4\)c_1$. By the definition of $\mathcal{G}$-harmonicity and the above estimates, we obtain
\beq{E:27.2}\barray
1 > f(\overline{x}_{i_n},i_n) \ad \geq \E_{\overline{x}_{i_n},i_n}[f(X_{T^{i_n}_F\wedge\tau_{B(x_0,4R)}},\Lambda_{T^{i_n}_F\wedge\tau_{B(x_0,4R)}}); T^{i_n}_F<\tau_{B(x_0,4R)}]\\
\ad \ge  \dfrac{\xi N_n r_n^\beta}{\kappa_2}
\P_{\overline{x}_{i_n},i_n}(T^{i_n}_F<\tau_{B(x_0,4R)})\\
\ad \ge
\dfrac{\xi N_n r_n^{\beta+6} c_5}{\kappa_2}

\\
\ad \ge
2,
\earray\eeq	
which is a contradiction. Note that the last inequality follows from $N_n\ge N_1\exp\(c_3n\)$
and our choice of $N_1$ given by \eqref{E:25}.
  Thus, \eqref{E:27} is valid. Therefore, there is a compact subset $\wdt{F}$ of $B(x_n,r_n/2)\setminus A_{n}$ such that $|\wdt{F}|\geq\frac 1 2|B(x_n,r_n/2)|$. By the definition of $\wdt{F}$ and $A_{n}$,
$$f(x,i_n)< \dfrac{\xi N_n r_n^\beta}{\kappa_2}\quad \text{for}\quad  x\in \wdt{F}.$$
Denote $\tau_{r_n}:=\tau_{B(x_n,r_n)}$,   $p_n:=\P_{x_n,i_n}(T^{i_n}_{\wdt{F}}<\tau_{r_n})$ and $M_n:=\sup_{(y, j)\in B(x_n,2r_n)\times {\cal M} }f(y, j)$. Since $|\wdt{F}|\geq\frac 1 2|B(x_n,r_n/2)|$, using \eqref{E:21.3}, we obtain \beq{E:27.3}p_n\ge \dfrac{1}{2}\Phi\(\dfrac{\al(d)}{2^{d+1}}\):= c_6 \quad \text{for}\quad n=1, 2, \dots\eeq
	By the definition of $\G$-harmonic function and the right continuity of the sample paths of $\(X_t, \Lambda_t\)$, we have
\beq{E:28}\barray
N_n\ad  =  f(x_n,i_n)=\E_{x_n,i_n}[f(X_{T^{i_n}_{\wdt{F}}},\Lambda_{T^{i_n}_{\wdt{F}}}):T^{i_n}_{\wdt{F}}<\tau_{r_n}]\\
\ad \quad +\E_{x_n,i_n}[f(X_{\tau_{r_n}},\Lambda_{\tau_{r_n}}):X\(\tau_{r_n}\)\in B(x_n,2r_n),\tau_{r_n}<T^{i_n}_{\wdt{F}}] \\
\ad \quad +\E_{x_n,i_n}[f(X_{\tau_{r_n}},\Lambda_{\tau_{r_n}}):X\(\tau_{r_n}\)\notin B(x_n,2r_n),\tau_{r_n}<T^{i_n}_{\wdt{F}}] \\
\ad \le \dfrac{\xi N_n r_n^\beta}{\kappa_2}+M_n(1-p_n) \\
\ad \quad +\E_{x_n,i_n}[f(X_{\tau_{r_n}}, \Lambda_{\tau_{r_n}}):X_{\tau_{r_n}}\notin B(x_n,2r_n),\tau_{r_n}<T^{i_n}_{\wdt{F}}].
\earray\eeq
Take a point $y_n\in \wt F$. Then $f(y_n,i_n)< \dfrac{\xi N_n r_n^\beta}{\kappa_2}$.  We then deduce from \eqref{E:22} that
\begin{eqnarray*}
	\dfrac{\xi N_n r_n^\beta}{\kappa_2} &>& f(y_n,i_n)\\
	&\geq&\E_{y_n,i_n}[f(X_{\tau_{r_n}},\Lambda_{\tau_{r_n}}):X_{\tau_{r_n}}\notin B(x_n,2r_n)]\\
	&\geq&\frac{1}{c_2\alpha_{2r_n}}\E_{x_n,i_n}[f(X_{\tau_{r_n}},\Lambda_{\tau_{r_n}}):X_{\tau_{r_n}}\notin B(x_n,2r_n)].
\end{eqnarray*}
It follows that
\bea\E_{x_n,i_n}[f(X_{\tau_{r_n}},\Lambda_{\tau_{r_n}}):X_{\tau_{r_n}}\notin B(x_n,2r_n)]\ad \leq
\dfrac{\xi N_n r_n^\beta c_2\alpha_{2r_n}}{\kappa_2}\\
\ad
\le \dfrac{\xi c_2}{2^\beta}N_n ,\eea
where the last inequality is obtained by noting that $\alpha_{2r_n}\le \kappa_2 (2r_n)^{-\beta}$.
Hence by \eqref{E:28},
\beq{E:28.0}N_n\leq \Big(\dfrac{\xi }{\kappa_2}+\dfrac{\xi c_2}{2^\beta}\Big)N_n+M_n(1-p_n).\eeq
Denote $\eta=1-\Big(\dfrac{\xi }{\kappa_2}+\dfrac{\xi c_2}{2^\beta}\Big)$. Let $\xi>0$ be sufficiently small such that
$\dfrac{\eta}{1-c_6}>3/2.$ By \eqref{E:27.3} and \eqref{E:28.0}, $M_n/N_n> 3/2.$
Using the definition of $M_n$,
there is $(x_{n+1}, i_{n+1})\in B(x_n,2r_n)\times {\cal M}$ so that
\bea
N_{n+1}:=f(x_{n+1},i_{n+1})\ad \geq  3N_n/2.\eea
We take $c_4=\ln(3/2)$. Then \eqref{E:26} holds for $k=n$.
By induction, we have constructed a sequence of points $\{(x_k, i_k)\}$ such that \eqref{E:26} holds for all $k\ge 1$.
 It can be seen that $N_k\to \infty$ as $k\to \infty$, a contradiction to the assumption that $f$ is bounded.

Thus, for a positive constant $N_1$ sufficiently large such that \eqref{E:25} holds, we have $$f(x, i)< N_1 \quad \text{for all} \quad (x, i)\in  B(x_0, R)\times {\cal M}.$$ Since $\inf\limits_{(x, i)\in B(x_0,R)\times \M}f(x,i)=1/2$, we arrive at
\begin{equation}\label{E:29}
f(x,i )\leq 2 N_1f(y, j), \quad  x,y\in B(x_0,R)
\hbox{ and } i, j\in {\cal M}.
\end{equation}
For any compact set $\K\subset D$, we use a standard finite ball covering argument. Since $\K$ is compact,
there exists a finite number of points $z_k\in \K$, $k=1, 2, \dots ,n$ such that $$\K\subset\bigcup\limits_{k=1}^n B(z_i,R)\subset D,$$ and $|z_k-z_{k-1}|<R/2$. Let $x,y\in \K$ and $i, j\in {\cal M}$. Applying Harnack inequality \eqref{E:29} at most $n+1$ times, we obtain
$f(x,i )\leq (2 N_1)^{n+1}f(y, j)$.
	
Now we suppose that \eqref{E:21.1} is invalid. Then there exists $i\in \M$ such that $f(x, i)\ge 1$ for all $x\in B(x_0, 2R)$. Set $$K_i:=\inf\limits_{x\in B(x_0, 2R)}f(x, i), \quad K: =\sup\limits_{i\in \M}K_i, \quad  g(x, i):=f(x, i)/3K.$$
It follows that \eqref{E:21.1} holds with $g$ in place of $f$. Moreover, if $i_0\in \M$ and $K=K_{i_0}$, then  $$g(x, i_0)\ge 1/3 \quad \text{for}\quad x\in B(x_0, 2R).$$
By the same argument as in Theorem \ref{T:3.8}, there is a constant $c_7>0$ such that
\bea
g(x, i)\ad \ge G^i_{B(x_0, 2R)}\Big(q_{ii_0}(\cdot)g(\cdot, i_0)\Big)(x)\\
\ad \ge  c_7,
\eea
for all $(x, i)\in B(x_0, R)\times \M$, where $G^i_{B(x_0, 2R)}$ is the Green operator of $\wt X^i$ in $B(x_0, 2R)$. Note also that $\inf\limits_{x\in D}q_{ii_0}(x)\ge q_{ii_0}^0>0$.
The Harnack inequalities for $g$, and
for $f$
can be established
similarly
as in the previous case.
\end{proof}

\section{Further Remarks}
This paper has been devoted to switching jump diffusions.
Important properties such as maximum principle and Harnack inequality have been obtained. The utility and applications of these results will be given in a subsequent paper \cite{CCTY17} for obtaining recurrence and ergodicity of switching jump diffusions. The ergodicity can be used in a wide variety of control and optimization problems with  average cost per unit time objective functions (see also various variants of the long-run average cost problems in \cite{JY13}), in which the instantaneous measures are replaced by the corresponding ergodic measures.

We note that the references \cite{CK1, CK2, CK3, CWX, NT}
are devoted to regularity  for the parabolic functions
of non-local operators (on each of the parallel plane). The results obtained in this paper should be useful when
one considers
regularity  of the coupled systems or switched jump-diffusions.


\begin{thebibliography}{99}
\setlength{\baselineskip}{0.18in}

\bibitem{AGM99}
 Arapostathis, A.,  Ghosh, M.K., and  Marcus, S.I. (1999), Harnack's inequality
  for cooperative weakly coupled elliptic systems, {\it Comm. Partial Differential
  Equations}, {\bf 24}, 1555--1571.

\bibitem{AR16} Athreya, S. and Ramachandran, K. (2016) Harnack inequality for non-local Schr\"odinger operators, preprint available at
arXiv:1507.07289v5 [math.PR].


\bibitem{BK05}
 Bass, R.F.  and Kassmann, M. (2005) Harnack inequalities for non-local
  operators of variable order, {\it Trans. Amer. Math. Soc.}, {\bf 357},
 837--850.

\bibitem{BKK}
 Bass, R.F., Kassmann, M. and Kumagai, T. (2010) Symmetric jump processes: Localization,
heat kernels and convergence, {\it Ann. Inst. Henri Poincar\'e Probab. Stat.}, {\bf 46},
59--71.


\bibitem{BL02}
 Bass, R.F. and Levin, D.A. (2002)
 Harnack inequalities for jump
  processes, {\it Potential Anal.}, {\bf 17}, 375--388.

\bibitem{CS09} Caffarelli, L. and  Silvestre, L. (2009)
Regularity theory for fully nonlinear
integro-differential equations,
{\it Comm. Pure Appl. Math.} {\bf 62}, 597--638.

\bibitem{CCTY17} X. Chen, Z.-Q. Chen, K. Tran, and G. Yin, Recurrence and ergodicity for
a class of regime-switching jump diffusions, preprint, 2017.

\bibitem{CK1} Chen, Z.-Q. and Kumagai, T. (2003)
 Heat kernel estimates for stable-like processes
on $d$-sets.  {\it Stochastic Process Appl. \bf 108},
27-62.

\bibitem{CK2} Chen, Z.-Q. and Kumagai, T. (2008)
 Heat kernel estimates for jump processes of mixed types on
metric measure spaces.    {\it Probab. Theory Relat. Fields \bf 140}, 277-317.

\bibitem{CK3}
 Chen, Z.-Q. and Kumagai, T. (2010) A priori h\"older estimate, parabolic
  harnack principle and heat kernel estimates for diffusions with jumps,
  {\it Revista Matematica Iberoamericana}, {\bf 26}, 551--589.


\bibitem{C16}
 Chen,  Z.-Q.,  Hu, E.,  Xie, L. and  Zhang, X. (2016) Heat kernels for non-symmetric diffusions operators with jumps,
preprint.

\bibitem{CWX}
 Chen, Z.-Q., Wang, H. and Xiong, J. (2012) Interacting superprocesses with
  discontinuous spatial motion, {\it Forum Math.}, {\bf 24}, 1183--1223.

\bibitem{CZ}
  Chen, Z.-Q. and Zhao, Z. (1996) Potential theory for elliptic systems, {\it Ann.
  Probab.}, {\bf 24}, 293--319.

\bibitem{CZ97}
\leavevmode\vrule height 2pt depth -1.6pt width 23pt, (1997) Harnack principle
  for weakly coupled elliptic systems, {\it J. Differential Equations}, {\bf 139},
  261--282.

\bibitem{Evans10}
 Evans, L.C. (2010) {\it Partial differential equations},
 Amer. Math. Soc., Providence, RI,
  Second Ed.

\bibitem{Foondun09}
 Foondun, M. (2009) Harmonic functions for a class of integro-differential
  operators, {\it Potential Anal.}, {\bf 31}, 21--44.

\bibitem{INW}
 Ikeda, N., Nagasawa, M. and Watanabe, S. (1966) A construction of markov
  process by piecing out, {\it Proc. Japan Academy}, {\bf 42}, 370--375.

\bibitem{JY13}
Jasso-Fuentes, H.,  Yin, G., (2013)
{\it Advanced Criteria for Controlled Markov-Modulated Diffusions in an Infinite Horizon: Overtaking, Bias, and Blackwell Optimality}, Science Press, Beijing, China.

\bibitem{Kry87}
 Krylov, N.V. (1987) {\it Nonlinear Elliptic and Parabolic Equations of the
  Second Order},   D.
  Reidel Publishing Co., Dordrecht,
Translated from the Russian by P.L. Buzytsky.

\bibitem{Komatsu}
  Komatsu, T. (1973) Markov processes associated with certain integro-differential, {\it Osaka J. Math.}, {\bf 10}, 271-303.


\bibitem{Kushner90}
 Kushner, H. (1990) {\it Weak Convergence Methods and Singularly Perturbed
  Stochastic Control and Filtering Problems}, Birkh\"auser, Boston.

\bibitem{Liu16}
Liu, R. (2016) Optimal stopping of switching diffusions with state
  dependent switching rates,
{\it Stochastics}, {\bf 88},
586–-605.

\bibitem{MY06}
 Mao, X. and Yuan, C. (2006) {\it Stochastic Differential Equations with Markovian
  Switching}, Imperial College Press, London, UK.

\bibitem{M}
 Meyer, P. (1975) Renaissance, recollements, m\'elanges, relentissement de
  processus de markov, {\it Ann. Inst. Fourier}, {\bf 25}, 465--497.


\bibitem{MP} Mikulevicius, R. and  Pragarauskas, H. (1988)
On H\"older continuity of solutions of certain integro-differential equations.
{\it Ann. Acad. Scien. Fenn. Ser. A. I. Math.}, {\bf 13}, 231-238.

\bibitem{NT}
 Negoro, A. and Tsuchiya, M. (1989) Stochastic processes and semigroups
  associate with degenerate L\'evy generating operators, {\it Stochastics
  Stochastic Reports}, {\bf 26}, 29--61.

\bibitem{PW67}
  Protter, M.H. and  Weinberger, H.F. (1967) {\it Maximum Principles in
  Differential Equations}, Prentice-Hall, Inc., Englewood Cliffs, N.J.

\bibitem{Sharpe} Sharpe, M. (1986) {\it General Theory of Markov Processes}, Academic, New York, 1986.

\bibitem{SV05}
 Song, R. and Vondra{\v{c}}ek, Z. (2005) Harnack inequality for some
  discontinuous Markov processes with a diffusion part,
  {\it Glas. Mat. Ser. III},
 {\bf 40(60)}, 177--187.


\bibitem{Wang14}
 Wang, J. (2014) Martingale problems for switched processes, {\it Mathematische
  Nachrichten}, {\bf 287}, 1186--1201.

\bibitem{Xi09}
 Xi, F. (2009) Asymptotic properties of jump-diffusion processes with
  state-dependent switching, {\it Stochastic Process. Appl.}, {\bf 119},
  2198--2221.

\bibitem{YZ10}
 Yin, G. and Zhu, C. (2010) {\it Hybrid Switching Diffusions:  Properties and Applications},
 Springer, New York.


\end{thebibliography}
\end{document}